\DeclareMathAlphabet\mathbfcal{OMS}{cmsy}{b}{n}
\newtheorem{theorem}{Theorem}
\theoremstyle{plain}
\newtheorem{corollary}{Corollary}
\newtheorem{lemma}{Lemma}
\newtheorem{proposition}{Proposition}
\newtheorem{remark}{Remark}
\newtheorem*{proposition*}{Proposition}
\newcommand{\Hom}{{\mbox{Hom}}}
\newcommand{\End}{{\mbox{End}}}
\newcommand*{\QEDB}{\null\nobreak\hfill\ensuremath{\Box}}
\newtheorem*{observation*}{Observation}
\newtheorem*{theorem*}{Theorem}
\newtheorem*{claim*}{Claim}  
\title{Curvature, integrability, and the six sphere}
\author{Gabriella Clemente}
\date{}
\renewcommand\tableofcontents{%
    \@starttoc{toc}%
}
\begin{document}
\maketitle

\begin{abstract}
This note is about the interplay between the almost-hermitian and Riemannian geometries of a manifold. These geometries can be seen to interact through curvature. The main result is an obstruction equation to the integrability of almost-complex structures orthogonal with respect to Riemannian metrics with constrained sectional curvature. Several geometric consequences ensue, such as a formula for the norm of the Levi-Civita covariant derivative of a hypothetical orthogonal complex structure. Our results lead to a partial recovery of the well-known fact that the round $6$-sphere $S^6$ is not hermitian. The partial proof is intrinsic in nature, and shows some level of promise when it comes to generalizing the non-complexity of the round $S^6$ result in new directions. 
\end{abstract} 

\section{Introduction}
This note is concerned with\\

\noindent
 \textbf{\underline{Question A}:} How does constraining the (various) curvatures of the Riemannian metric of an almost-hermitian structure affect integrability in high enough dimensions?\\ 
 
As demonstrated in \cite{ACOTI}, the curvature of torsion-free connections, in particular, the Levi-Civita one, is related to the integrability of almost-complex structures. In fact, in sufficiently high dimensions, constant curvature obstructs the existence of certain special complex structures at a very basic level (Theorem 1 \cite{ACOTI}). Question A can be tackled by an analysis of the obstruction equations from \cite{ACOTI}, but specialized to the Levi-Civita connection of a Riemannian metric with prescribed curvature and to metric compatible almost-complex structures. 

The best studied almost-hermitian example where integrability is obstructed is the round $6$-sphere $S^6.$ Hence, the round $S^6$ is a point of access to study Question A. The only spheres that can support an almost-complex structure are $S^2$ and $S^6,$ $S^2$ being, in addition, the complex manifold $\mathbb{CP}^1$ \cite{Borel}. It is an open problem to decide if $S^6$ is complex or not though there seems to be more evidence in support of the latter. $S^6$ cannot carry a complex structure that is orthogonal w.r.t.\ to the round metric \cite{LB} (see also the very clear exposition in \cite{Ferreira}). This theorem has an extension to metrics that are conformal to the round metric, and the nature of the proof is extrinsic \cite{Salamon}. The baseline argument is that a hypothetical hermitian complex structure on the conformally flat $S^6$ would produce a holomorphic embedding into a K{\"a}hler manifold, thereby forcing $S^6$ to be K{\"a}hler too, which is impossible as $H^2(S^6, \mathbb{R})=0.$ Although the use of such extrinsic methods is certainly efficient, it might be opaquing what is at the heart of the problem of (non-)integrability. Another extension of the theorem from \cite{LB} is to metrics nearby the round one \cite{Boris}.

This note falls short of re-proving LeBrun's theorem from \cite{LB}. However, the technique is intrinsic, it helps to address Question A, and shows some level of promise of being generalized. Specifically, the method makes uses of the curvature of a Riemannian metric in an explicit way to obtain an obstruction equation for the existence of an orthogonal complex structure. This equation leads to geometrical conclusions that are not too far away from the K{\"a}hler contradiction produced in \cite{LB}. It seems plausible then, that if one could re-establish Lebrun's result with this intrinsic approach, then one could also generalize the result in new directions. The generalization could happen by considering perturbations of the curvature obstruction equation coming from perturbations of the round metric. For additional comments on this, see the final section. 

The organization of this note is as follows: section 2 is a review of the original derivation of curvature obstructions from \cite{ACOTI}. Sections 3 -- 5 compute a refinement of the first curvature obstruction for the round $S^6,$ and use it to deduce various differential geometric facts (Corollary \ref{S6}). Section 5 also outlines a plan to study Question A through the special case of $S^6.$

\section{Obstructures}

This section is an overview of the parts of \cite{ACOTI} that will be relevant in the sequel. An informal presentation of this material can be found in \cite{Srni}. 

Let $M$ be a smooth manifold, and $\Omega^{\bullet} (M, T_M)=\bigoplus_{k=0}^n \Omega^k (M, T_M)$ be the space of tangent bundle valued differential forms. The spaces \[\Omega^{\bullet} \big(M, \End_{\mathbb{R}}(T_M)\big)=\bigoplus_{k\geq0} \Omega^k \big(M, \End_{\mathbb{R}}(T_M)\big), \mbox{ and } \Omega^{\bullet} \big(M, \bigwedge^{\bullet} {T_M}\big)=\bigoplus_{k\geq0} \bigoplus_{p+q=k}  \Omega^p \big(M, \bigwedge^q {T_M}\big)\] of endomorphism-, respectively polyvector-valued forms on $M$ are graded algebras for the following products: using the left self-action of $\End_{\mathbb{R}}(T_M),$ \[\cdot:\End_{\mathbb{R}}(T_M) \times T^*_M \otimes T_M \to T^*_M \otimes T_M,\] \[(S, f\otimes e)\mapsto S\cdot (f\otimes e):=f\otimes S(e),\] the product of $\alpha \in \Omega^k \big(M, \End_{\mathbb{R}}(T_M)\big)$ and $\beta \in \Omega^l \big(M, \End_{\mathbb{R}}(T_M)\big)$ is defined by the formula
\begin{equation*}
\begin{split}
(\alpha \wedge \beta)(X_1,\dots,X_{k+l})=\frac{1}{k!l!}\sum_{\sigma \in S_{k+l}} sign(\sigma) \alpha(X_{\sigma(1)},\dots,X_{\sigma(k)})\cdot \beta(X_{\sigma(k+1)},\dots,X_{\sigma(k+l)});
\end{split}
\end{equation*}
the product of $\gamma \in  \Omega^i \big(M, \bigwedge^j {T_M}\big)$ and $\theta \in \Omega^{k} \big(M, \bigwedge^{l} {T_M}\big)$ is given by
\begin{equation*}
\begin{split}
(\gamma \wedge \theta)(X_1,\dots,X_{i+k})=\frac{1}{2}\frac{1}{i!k!}\sum_{\sigma \in S_{i+k}} sign(\sigma) \gamma(X_{\sigma(1)},\dots,X_{\sigma(i)})\wedge \theta(X_{\sigma(i+1)},\dots,X_{\sigma(i+k)}).
\end{split}
\end{equation*}

Note that if $\gamma \in \Omega^i(M,T_M),$ and $\theta \in \Omega^k(M,T_M),$ then $\gamma \wedge \theta = (-1)^{ik+1} \theta \wedge \gamma.$ 

The graded algebras \[\Omega^{\bullet} \big(M, \End_{\mathbb{R}}(T_M)\big)=\bigoplus_{k\geq0} \Omega^k \big(M, \End_{\mathbb{R}}(T_M)\big), \mbox{ and } \Omega^{\bullet} \big(M, \bigwedge^{\bullet} {T_M}\big)=\bigoplus_{k\geq0} \bigoplus_{p+q=k}  \Omega^p \big(M, \bigwedge^q {T_M}\big)\] are certainly rings with the usual addition, and the space \[\Omega^{\bullet} (M, T_M)=\bigoplus_{k\geq 0}\Omega^k (M, T_M)\] can be viewed as both a left $\Omega^{\bullet} \big(M, \End_{\mathbb{R}}(T_M)\big)$-module and a right $\Omega^{\bullet} \big(M, \bigwedge^{\bullet} {T_M}\big)$-module. Let $\rho \in \Omega^s (M, T_M).$ The left action is given, for any $\alpha \in \Omega^k \big(M, \End_{\mathbb{R}}(T_M)\big),$ by

\begin{equation*}
\begin{split}
(\alpha \wedge \rho)(X_1,\dots,X_{k+s})=\frac{1}{k!s!}\sum_{\sigma \in S_{k+s}} sign(\sigma) \alpha(X_{\sigma(1)},\dots,X_{\sigma(k)})\big(\rho(X_{\sigma(k+1)},\dots,X_{\sigma(k+s)})\big),
\end{split}
\end{equation*}

The right action is given, for any $\gamma \in  \Omega^i \big(M, \bigwedge^j {T_M}\big),$ assuming that $s \geq j,$ by 
\begin{equation}\label{1}
\begin{split}
(\rho \wedge \gamma)(X_1,\dots,X_{s-j+i})=\frac{1}{(s-j)!i!}\sum_{\sigma \in S_{s-j+i}} sign(\sigma) \rho(X_{\sigma(1)},\dots,X_{\sigma(s-j)},\cdot,\dots,\cdot)\big(\gamma(X_{\sigma(s-j+1)},\dots,X_{\sigma(s-j+i)})\big).
\end{split}
\end{equation}
Otherwise, when $s<j,$ put $\rho \wedge \gamma=0.$ A closer look at formula \ref{1}: indeed, $\rho(X_{\sigma(1)},\dots,X_{\sigma(s-j)},\cdot,\dots,\cdot) \in \Omega^{j} (M, T_M).$ Since 
\[\Omega^{j} (M, T_M)=\Omega^0 \big(M, \bigwedge^j {T_M}^* \otimes T_M\big) \simeq \Omega^0 \Big(M, \Hom_{\mathbb{R}}\Big(\bigwedge^j {T_M},T_M\Big)\Big),\] one has that $\rho(X_{\sigma(1)},\dots,X_{\sigma(s-j)},\cdot,\dots,\cdot) \in \Omega^0 \Big(M, \Hom_{\mathbb{R}}\Big(\bigwedge^j {T_M},T_M\Big)\Big).$ Then, since \[\gamma(X_{\sigma(s-j+1)},\dots,X_{\sigma(s-j+i)})\ \in \bigwedge^j {T_M},\] \[\rho(X_{\sigma(1)},\dots,X_{\sigma(s-j)},\cdot,\dots,\cdot)\big(\gamma(X_{\sigma(s-j+1)},\dots,X_{\sigma(s-j+i)})\big) \in T_M.\] Hence, \[(\rho \wedge \gamma)(X_1,\dots,X_{s-j+i}) \in T_M\] as desired. 

Denote the space of of almost-complex structures on $M$ by $AC(M) \subset \Omega^1(M, T_M).$ Recall that $A \in AC(M)$ is integrable iff its Nijenhuis tensor vanishes identically, i.e.\ iff 

\begin{equation*}
\begin{split}
N_A (\zeta, \eta)&:=[A(\zeta), A(\eta)]-A([A(\zeta),\eta]+[\zeta,A(\eta)])-[\zeta,\eta]\\
&=0
\end{split}
\end{equation*}
for all vector fields $\zeta, \eta \in \mathfrak{X}(M)$ \cite{New}.

The curvature obstruction equations to the integrability of almost-complex structures from \cite{ACOTI} facilitate the probing of the almost-complex geometry of a manifold with Riemannian metrics of prescribed curvature. The idea behind this approach is straightforward: covariantly differentiate the Nijenhuis tensor $N_J$ of a $J \in AC(M).$ For $k \geq 1,$ the order $k$ covariant derivative of $N_J$ set equal to zero, $D^k N_J=0,$ can be taken as an obstruction equation. Certainly, if $D^k N_J \neq 0$ for some $k,$ then $J$ cannot be integrable. Call the left hand side of such an obstruction equation an \emph{obstructure};  e.g.\ the differential form $D^k N_J \in \Omega^{k+2}(M, T_M)$ is an obstructure. 

Observe that when $\dim_{\mathbb{R}}{M}=2,$ the integrability of almost-complex structures is unobstructed (i.e.\ for any $A \in AC(M^2),$ $N_A=0,$ and therefore $D^k N_A=0$ automatically for each $k$). Hence, the fact that the unit $S^2$ carries a constant curvature equal to $1$ Riemannian metric is inconsequential. Moreover, complex surfaces have been classified (see, for instance, \cite{EK}). It is for these reasons that Question A is posed for high dimensional $M.$ From this point on, it will be assumed that $\dim_{\mathbb{R}}{M} \geq 6.$

In \cite{ACOTI}, it was shown that the integrability of $A$ is equivalent to the $A$-invariance of $d^{\nabla}A$ (i.e.\ $d^{\nabla} A(A(X),A(Y))=d^{\nabla}A(X,Y)$), and $\nabla$ can be any symmetric connection.

\begin{lemma}{(Lemma 1 \cite{ACOTI})}\label{Lem1}
If $A \in AC(M),$ $\nabla$ is any torsion-free connection on $T_M,$ and $I^{\nabla}_A:=d^{\nabla} A \wedge (A \wedge A)-d^{\nabla} A,$ then $A$ is integrable iff $I^{\nabla}_A=0.$
\end{lemma}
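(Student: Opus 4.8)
The plan is to establish the equivalence $A$ integrable $\iff I^{\nabla}_A = 0$ by unwinding both sides into expressions involving the Nijenhuis tensor $N_A$ and showing they encode the same information. First I would recall the pointwise identity expressing the exterior covariant derivative $d^{\nabla}A$ of the vector-valued $1$-form $A$: for vector fields $X,Y$, one has $d^{\nabla}A(X,Y) = (\nabla_X A)(Y) - (\nabla_Y A)(X)$, and since $\nabla$ is torsion-free this can be rewritten in bracket form. The standard computation (the same one used to prove the Newlander--Nirenberg criterion via a connection) gives, after using $\nabla_X Y - \nabla_Y X = [X,Y]$ and the Leibniz rule for $\nabla A$ applied to $A^2 = -\Id$, a relation of the shape $d^{\nabla}A(X,Y) - A\bigl(d^{\nabla}A(A X, Y)\bigr)$-type combinations equal to $\pm N_A(X,Y)$ up to a universal nonzero constant. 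Concretely I expect to arrive at the intermediate claim announced in the text just before the lemma: $A$ is integrable iff $d^{\nabla}A$ is $A$-invariant, i.e. $d^{\nabla}A(AX,AY) = d^{\nabla}A(X,Y)$ for all $X,Y$.

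Granting that intermediate claim, the remaining task is purely algebraic: I must check that $I^{\nabla}_A = d^{\nabla}A \wedge (A\wedge A) - d^{\nabla}A$ vanishes precisely when $d^{\nabla}A$ is $A$-invariant. Here $A \wedge A \in \Omega^0(M,\bigwedge^2 T_M)$ (a polyvector-valued $0$-form) and $d^{\nabla}A \in \Omega^2(M,T_M)$, so the term $d^{\nabla}A \wedge (A\wedge A)$ is formed via the right action defined in formula \eqref{1} with $s = 2$, $j = 2$, $i = 0$. Plugging into that formula, $(d^{\nabla}A \wedge (A\wedge A))(X,Y)$ becomes $d^{\nabla}A(\cdot,\cdot)$ evaluated on the bivector $(A\wedge A)(X,Y) = A(X)\wedge A(Y) - $ (symmetrization constant), which unravels exactly to $d^{\nabla}A(AX,AY)$ up to the normalizing factors $\tfrac12, \tfrac1{i!k!}$ appearing in the product definitions. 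Thus $I^{\nabla}_A(X,Y) = c\bigl(d^{\nabla}A(AX,AY) - d^{\nabla}A(X,Y)\bigr)$ for a nonzero universal constant $c$, and the equivalence follows immediately from the intermediate claim. I would be careful to track the combinatorial factors (the $\tfrac12$ in the polyvector product and the $\tfrac1{(s-j)!i!}$ in the module action) so that the constant $c$ is genuinely nonzero and no cross terms survive.

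The main obstacle I anticipate is bookkeeping rather than conceptual: correctly interpreting $A\wedge A$ as an element of $\Omega^0(M,\bigwedge^2 T_M)$ and then feeding it through the right-module product in \eqref{1} without sign or normalization errors, since these wedge products on bundle-valued forms carry nonstandard combinatorial constants as set up in this section. A secondary point requiring care is the derivation of the intermediate claim in a way that is valid for an arbitrary torsion-free $\nabla$ (not just Levi-Civita): one must verify that all connection-dependent terms cancel, leaving only the tensorial quantity $N_A$, which is the classical fact that $N_A$ can be computed with any symmetric connection by replacing Lie brackets with covariant-derivative commutators. Once those two ingredients are in place — the $\nabla$-independent reexpression of integrability as $A$-invariance of $d^{\nabla}A$, and the algebraic identification of $I^{\nabla}_A$ with (a nonzero multiple of) the $A$-invariance defect — the lemma follows.
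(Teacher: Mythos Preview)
The paper does not supply its own proof of this lemma; it is quoted from \cite{ACOTI} and only the statement is reproduced. The one hint the paper gives is the sentence immediately preceding the lemma, namely that integrability of $A$ is equivalent to the $A$-invariance of $d^{\nabla}A$, i.e.\ $d^{\nabla}A(AX,AY)=d^{\nabla}A(X,Y)$. Your plan is built precisely around that intermediate claim together with the algebraic identification of $I^{\nabla}_A$ with the $A$-invariance defect, so your outline matches what the paper indicates the argument in \cite{ACOTI} to be.

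One correction on the bookkeeping you yourself flagged as the main obstacle: $A\wedge A$ does \emph{not} lie in $\Omega^{0}(M,\bigwedge^{2}T_M)$ but in $\Omega^{2}(M,\bigwedge^{2}T_M)$, since $A\in\Omega^{1}(M,T_M)=\Omega^{1}(M,\bigwedge^{1}T_M)$ and the polyvector-form product adds both the form degree and the polyvector degree. The paper confirms later (in the proof of Lemma~\ref{simple}) that $(A\wedge A)(X,Y)=A(X)\wedge A(Y)$. Hence in the right-module formula \eqref{1} the correct parameters are $s=2$, $j=2$, $i=2$ (not $i=0$), so that $d^{\nabla}A\wedge(A\wedge A)\in\Omega^{s-j+i}(M,T_M)=\Omega^{2}(M,T_M)$ as it must, and the computation yields $\bigl(d^{\nabla}A\wedge(A\wedge A)\bigr)(X,Y)=d^{\nabla}A(AX,AY)$ on the nose, with your constant $c=1$. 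With that fix your sketch goes through.
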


The integrability form $I^{\nabla}_A$ is a function of $A$ and $d^{\nabla} A.$ So naturally, the covariant exterior derivative of $I^{\nabla}_A$ will depend on $A,$ $d^{\nabla} A,$ and $(d^{\nabla})^2 A=R^{\nabla} \wedge A.$ It should be evident that the kth covariant exterior derivative of $I^{\nabla}_A$ will depend on \[A, d^{\nabla} A, (d^{\nabla})^2 A=R^{\nabla} \wedge A,\dots, (d^{\nabla})^{k+1} A=\begin{cases}(R^{\nabla})^{\frac{k+1}{2}} \wedge A & \mbox{ if } k \mbox{ is odd}\\
(R^{\nabla})^{\frac{k}{2}} \wedge d^{\nabla} A & \mbox{ if } k \mbox{ is even.}
\end{cases}\]
See Lemma 4 \cite{ACOTI} for the computaton of $(d^{\nabla})^r A,$ $r>0.$ The integrability of $A$ implies that the kth obstructure $(d^{\nabla})^k I^{\nabla}_A$ vanishes for all $k \geq 1.$ Here, it will be enough to consider only the 1st obstructure:

\begin{lemma}{(Remarks 1 and 2 \cite{ACOTI})}\label{coe1}
\begin{equation}\label{eqo}
d^{\nabla} I^{\nabla}_A=(R^{\nabla} \wedge A)\wedge (A \wedge A)+2d^{\nabla} A \wedge (d^{\nabla} A \wedge A)-R^{\nabla} \wedge A.
\end{equation} 
\end{lemma}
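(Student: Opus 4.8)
The plan is to derive \ref{eqo} by differentiating $I^{\nabla}_A=d^{\nabla} A \wedge (A \wedge A)-d^{\nabla} A$ term by term, using only the graded Leibniz rule for $d^{\nabla}$ with respect to the products of Section 2, together with the two structural identities $(d^{\nabla})^2 A=R^{\nabla}\wedge A$ (the curvature identity of Lemma 4 in \cite{ACOTI}) and the graded-commutativity relation $\gamma\wedge\theta=(-1)^{ik+1}\theta\wedge\gamma$ for $T_M$-valued forms recorded earlier in this section.

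First I would observe that each of the three products in play --- the left $\End_{\mathbb{R}}(T_M)$-action, the polyvector wedge on $\Omega^{\bullet}(M,\bigwedge^{\bullet}T_M)$, and the right $\Omega^{\bullet}(M,\bigwedge^{\bullet}T_M)$-action of \ref{1} --- is induced by a parallel bilinear bundle pairing, so that $d^{\nabla}$ is an antiderivation of form-degree $1$ for each:
\[
d^{\nabla}(\omega\wedge\eta)=d^{\nabla}\omega\wedge\eta+(-1)^{\deg\omega}\,\omega\wedge d^{\nabla}\eta,
\]
where $\deg\omega$ is the form degree of the first factor. (If desired one checks this directly from the explicit formulas; it is essentially contained in the computations behind Lemma 4 of \cite{ACOTI}.) Applying it to $A\wedge A$, and using that $d^{\nabla}A\in\Omega^2(M,T_M)$, the commutativity rule gives $A\wedge d^{\nabla}A=(-1)^{1\cdot 2+1}d^{\nabla}A\wedge A=-\,d^{\nabla}A\wedge A$, whence
\[
d^{\nabla}(A\wedge A)=d^{\nabla}A\wedge A+(-1)^{1}A\wedge d^{\nabla}A=2\,d^{\nabla}A\wedge A.
\]

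Now I would differentiate $I^{\nabla}_A$. Since $d^{\nabla}A$ has form degree $2$,
\[
d^{\nabla}I^{\nabla}_A=d^{\nabla}\!\big(d^{\nabla}A\big)\wedge(A\wedge A)+(-1)^{2}\,d^{\nabla}A\wedge d^{\nabla}(A\wedge A)-d^{\nabla}\!\big(d^{\nabla}A\big).
\]
Substituting $d^{\nabla}(d^{\nabla}A)=R^{\nabla}\wedge A$ in both occurrences and $d^{\nabla}(A\wedge A)=2\,d^{\nabla}A\wedge A$ from the previous step collapses this to
\[
d^{\nabla}I^{\nabla}_A=(R^{\nabla}\wedge A)\wedge(A\wedge A)+2\,d^{\nabla}A\wedge(d^{\nabla}A\wedge A)-R^{\nabla}\wedge A,
\]
which is exactly \ref{eqo}.

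The only genuine obstacle is bookkeeping: several distinct operations are all written ``$\wedge$'', so at every occurrence one must be explicit about which product (hence which Koszul sign and which target bundle) is intended, and one must confirm that the Leibniz rule really does apply to the asymmetric right action \ref{1} --- this is the one point where I would slow down and verify against the defining formula rather than appeal to generalities. Once the signs are pinned down the computation is purely formal.
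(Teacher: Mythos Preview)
Your argument is correct: differentiate $I^{\nabla}_A$ via the graded Leibniz rule, feed in $(d^{\nabla})^2A=R^{\nabla}\wedge A$, and use the polyvector graded-commutativity to get $d^{\nabla}(A\wedge A)=2\,d^{\nabla}A\wedge A$. The paper does not actually prove this lemma---it only cites Remarks~1 and~2 of \cite{ACOTI}---so there is no in-text proof to compare against, but your computation is the natural one and is presumably what the cited reference contains; your flagged caution about verifying the Leibniz rule for the right action~\ref{1} is the one place where care is genuinely needed.
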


See also Proposition 1 \cite{ACOTI} for the general, kth almost-complex obstructure. Equation \ref{eqo} can be further simplified, and this is covered in the next section. A succession of computational steps starting from this simplification will lead to an obstructure for the round $S^6.$ All results will be proven under the more general assumption of constant sectional curvature. On its own, this level of generality is immaterial. However, since the priority is to address Question A, it seems best to proceed in this fashion.

\section{The first curvature obstruction equation}
Throughout this section, with the exception of the last proposition, $\nabla$ is an arbitrary torsion-free connection on $T_M.$
\begin{lemma}\label{simple}
The 1st obstructure of $A \in AC(M)$ simplifies to \[d^{\nabla} I^{\nabla}_A=\frac{1}{2}R^{\nabla} \wedge A+2d^{\nabla} A\wedge (d^{\nabla} A \wedge A).\]
\end{lemma}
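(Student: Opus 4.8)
The plan is to start from the expression for $d^{\nabla}I^{\nabla}_A$ recorded in Lemma \ref{coe1} and to leave the term $2\,d^{\nabla}A\wedge(d^{\nabla}A\wedge A)$ untouched, so that the whole statement reduces to the single identity
\[
(R^{\nabla}\wedge A)\wedge(A\wedge A)-R^{\nabla}\wedge A=\tfrac12\,R^{\nabla}\wedge A
\]
in $\Omega^{3}(M,T_M)$, i.e.\ to showing that $(R^{\nabla}\wedge A)\wedge(A\wedge A)=\tfrac32\,R^{\nabla}\wedge A$. Everything is then a matter of understanding the operator ``$\wedge(A\wedge A)$'' on tangent-bundle valued $3$-forms.

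First I would record a pointwise formula for the map $B\mapsto B\wedge(A\wedge A)$ on $B\in\Omega^{3}(M,T_M)$. Since $A\wedge A\in\Omega^{2}\big(M,\bigwedge^{2}T_M\big)$ with $(A\wedge A)(X,Y)=A(X)\wedge A(Y)$ (from the defining formula of the polyvector-valued product), unwinding the right-action formula \ref{1} in the case $s=3$, $i=j=2$ — so that the prefactor is $\tfrac{1}{(s-j)!\,i!}=\tfrac12$ — and using that $B$ is alternating collapses the six-term sum over $S_{3}$ to
\[
(B\wedge(A\wedge A))(X_1,X_2,X_3)=B(X_1,AX_2,AX_3)-B(X_2,AX_1,AX_3)+B(X_3,AX_1,AX_2).
\]
This step is pure bookkeeping and the only place the precise normalizations of the products enter.

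Next I would substitute $B=(d^{\nabla})^{2}A=R^{\nabla}\wedge A$, using the cyclic expression for $(d^{\nabla})^{2}A$ computed in \cite{ACOTI}, namely $(R^{\nabla}\wedge A)(U,V,W)=R^{\nabla}(U,V)AW-R^{\nabla}(U,W)AV+R^{\nabla}(V,W)AU$. Expanding the three summands of the displayed formula, each one produces a term of the shape $R^{\nabla}(A\,\cdot\,,A\,\cdot\,)A\,\cdot$ together with terms carrying a single $A$ inside the curvature and a stray $A^{2}$ outside. The ``all-$A$'' terms assemble exactly into the cyclic sum $R^{\nabla}(AX_1,AX_2)AX_3+R^{\nabla}(AX_2,AX_3)AX_1+R^{\nabla}(AX_3,AX_1)AX_2$, which vanishes by the first Bianchi identity (available because $\nabla$ is torsion-free). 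For the remaining terms I would use $A^{2}=-\Id$ to turn the stray $A^{2}$'s into $-\Id$, reorganise the result as $\mu\wedge\Id$ for an explicit $\mu\in\Omega^{2}(M,\End_{\mathbb{R}}(T_M))$ assembled from $R^{\nabla}$ and $A$, and then apply the first Bianchi identity once more in each of the three slots to recognise $\mu\wedge\Id$ as the required multiple of $R^{\nabla}\wedge A$. Feeding this back into Lemma \ref{coe1} yields the lemma.

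I expect the main obstacle to be the final Bianchi collapse: tracking signs while invoking the first Bianchi identity repeatedly and confirming that the numerical coefficient comes out exactly as stated. A more computational but more robust alternative is to verify the displayed pointwise identity directly in a local frame $e_1,\dots,e_n,Ae_1,\dots,Ae_n$ adapted to $A$, where $A^{2}=-\Id$ is automatic; and as a sanity check one can specialise to a metric of constant sectional curvature $c$, where both sides of the key identity can be evaluated at once from $R^{\nabla}(X,Y)Z=c(\langle Y,Z\rangle X-\langle X,Z\rangle Y)$.
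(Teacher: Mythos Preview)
Your plan is the same as the paper's: both reduce Lemma~\ref{coe1} to the single identity $(R^{\nabla}\wedge A)\wedge(A\wedge A)=\tfrac{3}{2}\,R^{\nabla}\wedge A$, expand the right-action formula~\ref{1}, use $A^{2}=-\Id$, and finish with the first Bianchi identity. The only organisational difference is that you collapse the $S_{3}$-sum to the three terms $B(X_{1},AX_{2},AX_{3})-B(X_{2},AX_{1},AX_{3})+B(X_{3},AX_{1},AX_{2})$ and then isolate the ``all-$A$'' Bianchi cycle before handling the six mixed terms, while the paper keeps a double $S_{3}$-sum throughout; in spirit these are identical.

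There is, however, a genuine issue, and it is precisely the ``Bianchi collapse'' you flag as the main obstacle. Carrying out your computation exactly as written gives the coefficient $1$, not $\tfrac{3}{2}$: after the all-$A$ cycle vanishes, the six mixed terms pair (grouping by which $AX_{i}$ they contain) under Bianchi into $R^{\nabla}(X_{1},X_{2})AX_{3}-R^{\nabla}(X_{1},X_{3})AX_{2}+R^{\nabla}(X_{2},X_{3})AX_{1}=(R^{\nabla}\wedge A)(X_{1},X_{2},X_{3})$, with no extra factor. Your own constant-curvature sanity check confirms this: with $R^{\nabla}(X,Y)Z=c(\langle Y,Z\rangle X-\langle X,Z\rangle Y)$ one finds directly that $(R^{\nabla}\wedge A)\wedge(A\wedge A)=R^{\nabla}\wedge A$. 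The paper obtains $\tfrac{3}{2}$ through a bookkeeping slip: when it expands $(R^{\nabla}\wedge A)(X_{\sigma(1)},AX_{\sigma(2)},AX_{\sigma(3)})$ by a second sum over $\sigma'\in S_{3}$, it permutes the indices $\sigma(i)$ while leaving the $A$-decorations attached to the \emph{positions} rather than to the vectors, which is not what the definition of $R^{\nabla}\wedge A$ prescribes. So your strategy is sound and matches the paper's, but executing it faithfully yields $(R^{\nabla}\wedge A)\wedge(A\wedge A)-R^{\nabla}\wedge A=0$, hence $d^{\nabla}I^{\nabla}_{A}=2\,d^{\nabla}A\wedge(d^{\nabla}A\wedge A)$ rather than the stated $\tfrac{1}{2}R^{\nabla}\wedge A+2\,d^{\nabla}A\wedge(d^{\nabla}A\wedge A)$.
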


\begin{proof}
Let us first verify that \[(R^{\nabla} \wedge A) \wedge (A \wedge A)=\frac{3}{2}R^{\nabla} \wedge A.\] From Lemma \ref{coe1}, it is then immediate that \[d^{\nabla} I^{\nabla}_A=\frac{1}{2}R^{\nabla} \wedge A+2d^{\nabla} A\wedge (d^{\nabla} A \wedge A).\] Indeed, 
\begin{equation*}
\begin{split}
\big((R^{\nabla} \wedge A) \wedge (A \wedge A)\big)(X_1, X_2, X_3)&=\frac{1}{1!2!} \sum_{\sigma \in S_3} sign(\sigma) (R^{\nabla} \wedge A)(X_{\sigma(1)},\cdot,\cdot) \big((A \wedge A)(X_{\sigma(2)}, X_{\sigma(3)})\big) \\
&=\frac{1}{2} \sum_{\sigma \in S_3} sign(\sigma)(R^{\nabla} \wedge A)(X_{\sigma(1)},\cdot,\cdot) \big(A(X_{\sigma(2)})\wedge A(X_{\sigma(3)})\big) \\
&=\frac{1}{2} \sum_{\sigma \in S_3} sign(\sigma)(R^{\nabla} \wedge A)(X_{\sigma(1)},A(X_{\sigma(2)}),A(X_{\sigma(3)}))\\
&=\frac{1}{2} \sum_{\sigma \in S_3} sign(\sigma)\Big[\frac{1}{2!1!} \sum_{\sigma' \in S_3} sign(\sigma') \times \\
&R^{\nabla}(X_{\sigma'(\sigma(1))}, A(X_{\sigma'(\sigma(2))}))(A^2(X_{\sigma'(\sigma(3))}))\Big]\\
&=-\frac{1}{4} \sum_{\sigma \in S_3} sign(\sigma)\Big[\sum_{\sigma' \in S_3} sign(\sigma')R^{\nabla}(X_{\sigma'(\sigma(1))}, A(X_{\sigma'(\sigma(2))}))(X_{\sigma'(\sigma(3))})\Big]\\
&=-\frac{1}{4} \sum_{\sigma \in S_3} sign(\sigma)\Big[R^{\nabla}(X_{\sigma(1)}, A(X_{\sigma(2)}))(X_{\sigma(3)})+\\
&R^{\nabla}(X_{\sigma(2)}, A(X_{\sigma(3)}))(X_{\sigma(1)})+R^{\nabla}(X_{\sigma(3)}, A(X_{\sigma(1)}))(X_{\sigma(2)})\\
&-R^{\nabla}(X_{\sigma(2)}, A(X_{\sigma(1)}))(X_{\sigma(3)})-R^{\nabla}(X_{\sigma(1)}, A(X_{\sigma(3)}))(X_{\sigma(2)})\\
&-R^{\nabla}(X_{\sigma(3)}, A(X_{\sigma(2)}))(X_{\sigma(1)})\Big].
 \end{split}
\end{equation*}
To reach the second equality, note that
\begin{equation*}
\begin{split}
(A \wedge A)(X, Y)&=A(X)\wedge A(Y).
 \end{split}
\end{equation*}
Next, apply the $1^{st}$ Bianchi identity, \[R^{\nabla}(X,Y)(W)+R^{\nabla}(Y,W)(X)+R^{\nabla}(W,X)(Y)=0,\] to the last line. The conclusion is that
\begin{equation*}
\begin{split}
\big((R^{\nabla} \wedge A) \wedge (A \wedge A)\big)(X_1, X_2, X_3)&=-\frac{1}{4} \sum_{\sigma \in S_3} sign(\sigma)\Big[-R^{\nabla}(X_{\sigma(3)}, X_{\sigma(1)})(A(X_{\sigma(2)}))\\
&-R^{\nabla}(X_{\sigma(1)}, X_{\sigma(2)})(A(X_{\sigma(3)}))-R^{\nabla}(X_{\sigma(2)}, X_{\sigma(3)})(A(X_{\sigma(1)}))\Big]\\
&=\frac{1}{4} \sum_{\sigma \in S_3} sign(\sigma)\Big[R^{\nabla}(X_{\sigma(3)}, X_{\sigma(1)})(A(X_{\sigma(2)}))+\\
&R^{\nabla}(X_{\sigma(1)}, X_{\sigma(2)})(A(X_{\sigma(3)}))+R^{\nabla}(X_{\sigma(2)}, X_{\sigma(3)})(A(X_{\sigma(1)}))\Big].
\end{split}
\end{equation*}
Now, since 
\begin{equation*}
\begin{split}
(R^{\nabla} \wedge A)(X_1, X_2, X_3)&=\frac{1}{2!1!} \sum_{\sigma \in S_3} sign(\sigma)R^{\nabla}(X_{\sigma(1)}, X_{\sigma(2)})(A(X_{\sigma(3)}))\\
&=\frac{1}{2} \sum_{\sigma \in S_3} sign(\sigma)R^{\nabla}(X_{\sigma(1)}, X_{\sigma(2)})(A(X_{\sigma(3)})),
\end{split}
\end{equation*}
and likewise 
\begin{equation*}
\begin{split}
(R^{\nabla} \wedge A)(X_3, X_1, X_2)&=\frac{1}{2} \sum_{\sigma \in S_3} sign(\sigma)R^{\nabla}(X_{\sigma(3)}, X_{\sigma(1)})(A(X_{\sigma(2)}))
\end{split}
\end{equation*}
and
\begin{equation*}
\begin{split}
(R^{\nabla} \wedge A)(X_2, X_1, X_3)&=\frac{1}{2} \sum_{\sigma \in S_3} sign(\sigma)R^{\nabla}(X_{\sigma(2)}, X_{\sigma(1)})(A(X_{\sigma(3)})),
\end{split}
\end{equation*}
it follows from the total anti-symmetry of the $T_M$-valued $3$-form $R^{\nabla} \wedge A,$ that 
\begin{equation*}
\begin{split}
\big((R^{\nabla} \wedge A) \wedge (A \wedge A)\big)(X_1, X_2, X_3)&=\frac{1}{2} \Big[(R^{\nabla} \wedge A)(X_3, X_1, X_2)+(R^{\nabla} \wedge A)(X_1, X_2, X_3)+\\
&(R^{\nabla} \wedge A)(X_2, X_3, X_1) \Big]\\
&=\frac{3}{2} (R^{\nabla} \wedge A)(X_1, X_2, X_3).
\end{split}
\end{equation*}
\end{proof}

Lemma \ref{simple} suggests that the 1st curvature obstruction equation of $A \in AC(M)$ takes on the form

\begin{remark}{(Simplified 1st curvature obstruction equation)}\label{sl}
\[R^{\nabla} \wedge A+4d^{\nabla} A \wedge (d^{\nabla} A \wedge A)=0.\]
\end{remark}

Example 1 of \cite{ACOTI} verifies that 
\begin{lemma}\label{wed}
For any $\alpha \in \Omega^1 (M, T_M),$ $\beta \in \Omega^2 (M, T_M),$
\begin{equation*}
\begin{split}
\big(\beta \wedge (\beta \wedge \alpha)\big)(X_1, X_2, X_3)&=\frac{1}{2} \Big[\beta\big(\beta(X_1,X_2),\alpha(X_3)\big)+\beta\big(\beta(X_2,X_3),\alpha(X_1)\big)\\
&-\beta\big(\beta(X_1,X_3),\alpha(X_2)\big)\Big].
\end{split}
\end{equation*}
\end{lemma}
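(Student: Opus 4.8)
The plan is to unwind the two nested products directly from the definitions of Section~2, the one delicate point being which of the several operations written $\wedge$ is in play at each stage. Since $\beta$ and $\alpha$ are both $T_M$-valued forms, i.e.\ $\beta \in \Omega^2(M,\bigwedge^1 T_M)$ and $\alpha \in \Omega^1(M,\bigwedge^1 T_M)$, the inner $\beta \wedge \alpha$ is the product in the graded algebra $\Omega^{\bullet}(M,\bigwedge^{\bullet}T_M)$; by the polyvector grading it lands in $\Omega^3(M,\bigwedge^2 T_M)$, hence is a totally anti-symmetric $\bigwedge^2 T_M$-valued $3$-form. The outer $\wedge$ is then forced to be the right $\Omega^{\bullet}(M,\bigwedge^{\bullet}T_M)$-module action of this $\bigwedge^2 T_M$-valued $3$-form on $\beta \in \Omega^2(M,T_M)$ --- the only reading consistent with the gradings --- and its output lies in $\Omega^3(M,T_M)$, as it must.

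First I would compute $\beta \wedge \alpha$ outright. The polyvector product formula with $i=2$, $k=1$ gives
\[(\beta \wedge \alpha)(X_1,X_2,X_3)=\frac{1}{4}\sum_{\sigma \in S_3} sign(\sigma)\,\beta(X_{\sigma(1)},X_{\sigma(2)})\wedge \alpha(X_{\sigma(3)}).\]
Expanding over the six permutations and using $\beta(X,Y)=-\beta(Y,X)$ pairs the terms up, collapsing the sum to $2\big(\beta(X_1,X_2)\wedge\alpha(X_3)+\beta(X_2,X_3)\wedge\alpha(X_1)-\beta(X_1,X_3)\wedge\alpha(X_2)\big)$; thus
\[(\beta \wedge \alpha)(X_1,X_2,X_3)=\tfrac{1}{2}\big(\beta(X_1,X_2)\wedge\alpha(X_3)+\beta(X_2,X_3)\wedge\alpha(X_1)-\beta(X_1,X_3)\wedge\alpha(X_2)\big).\]

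Next I would apply formula \ref{1} with $\rho=\beta$ (so $s=2$) and $\gamma=\beta\wedge\alpha$ (so $i=3$, $j=2$), noting $s-j=0$, so the normalization is $\tfrac{1}{0!\,3!}=\tfrac{1}{6}$. With $s-j=0$ there are no vectors pre-plugged into $\rho$: the symbol $\rho(\cdot,\cdot)$ is just $\beta$ regarded as a map $\bigwedge^2 T_M \to T_M$ via $\beta(v\wedge w)=\beta(v,w)$, evaluated on the bivector $(\beta\wedge\alpha)(X_{\sigma(1)},X_{\sigma(2)},X_{\sigma(3)})$. Since $\beta\wedge\alpha$ is a totally anti-symmetric $3$-form, $(\beta\wedge\alpha)(X_{\sigma(1)},X_{\sigma(2)},X_{\sigma(3)})=sign(\sigma)\,(\beta\wedge\alpha)(X_1,X_2,X_3)$, so every one of the six summands reduces to the single term $\beta\big((\beta\wedge\alpha)(X_1,X_2,X_3)\big)$ and the $\tfrac{1}{6}$ exactly cancels the count. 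Substituting the expression for $\beta\wedge\alpha$ from the previous step and using that $\beta$ is linear in its $\bigwedge^2 T_M$-argument gives the asserted identity.

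The only part that is not pure bookkeeping is keeping the two roles of $\wedge$ straight: the inner product raises the polyvector degree (it is the algebra product in $\Omega^{\bullet}(M,\bigwedge^{\bullet}T_M)$, not a contraction), whereas the outer product is the contraction supplied by the right module action. It is precisely this that produces the terms $\beta(\beta(X_i,X_j),\alpha(X_k))$, with $\beta$ eating two genuine vectors, rather than the nested form $\beta(X_i,\beta(X_j,\alpha(X_k)))$ one would get by reading the inner $\wedge$ as a right action instead. Up to notation this is Example~1 of \cite{ACOTI}.
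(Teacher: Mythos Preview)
Your proof is correct: the inner product is the polyvector-algebra product (landing in $\Omega^3(M,\bigwedge^2 T_M)$), the outer one is the right-module contraction with $s-j=0$, and the normalizations work out exactly as you track them. The paper itself offers no argument here, citing only Example~1 of \cite{ACOTI}; your unwinding of the two $\wedge$'s is precisely the computation that reference carries out, so there is nothing to compare.
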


Let us expand the 1st curvature obstruction equation so as to make evident the passage between the almost-complex geometry of $M,$ which is a priori metric-independent, and the Riemannian geometry of $M.$

\begin{proposition}\label{metacx}
If $A \in AC(M)$ is integrable, then 
\begin{equation}\label{u1}
\begin{split}
&R^{\nabla}(X_1,X_2)(A(X_3))+R^{\nabla}(X_2,X_3)(A(X_1))-R^{\nabla}(X_1,X_3)(A(X_2))+2\Big[d^{\nabla} A\big(d^{\nabla}A (X_1,X_2),A(X_3)\big)+\\
&d^{\nabla} A\big(d^{\nabla} A(X_2,X_3),A(X_1)\big)-d^{\nabla} A\big(d^{\nabla} A(X_1,X_3),A(X_2)\big)\Big]=0.
\end{split}
\end{equation}
Moreover, when $\nabla$ is the Levi-Civita connection of a Riemannian metric $g$ on $M,$ then
\begin{equation}\label{u2}
\begin{split}
&Rm(X_1,X_2,A(X_3),X_4)+Rm(X_2,X_3,A(X_1),X_4))-Rm(X_1,X_3,A(X_2),X_4)+\\
&2\Big[g\Big(d^{\nabla} A\big(d^{\nabla}A (X_1,X_2),A(X_3)\big),X_4\Big)+g\Big(d^{\nabla} A\big(d^{\nabla}A (X_2,X_3),A(X_1)\big),X_4\Big)\\
&-g\Big(d^{\nabla} A\big(d^{\nabla}A (X_1,X_3),A(X_2)\big),X_4\Big)\Big]=0.
\end{split}
\end{equation}
\end{proposition}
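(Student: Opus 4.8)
The plan is to unpack the coordinate-free identity of Lemma \ref{simple} into the claimed tensorial form, and then pair with the metric in the Levi-Civita case. Since $A$ is integrable, Lemma \ref{Lem1} gives $I^{\nabla}_A=0$, hence $d^{\nabla}I^{\nabla}_A=0$; by Lemma \ref{simple} this is exactly $\tfrac12 R^{\nabla}\wedge A+2\,d^{\nabla}A\wedge(d^{\nabla}A\wedge A)=0$, i.e.\ the identity of Remark \ref{sl}, $R^{\nabla}\wedge A+4\,d^{\nabla}A\wedge(d^{\nabla}A\wedge A)=0$. It then remains only to evaluate this $T_M$-valued $3$-form on a triple $(X_1,X_2,X_3)$ and, afterward, on a fourth vector via $g$.

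For the curvature term I would expand $(R^{\nabla}\wedge A)(X_1,X_2,X_3)$ directly from the definition of the $\Omega^{\bullet}(M,\End_{\mathbb{R}}(T_M))$-module product: the sum over $S_3$ has six terms, and pairing each permutation with its composite with the transposition $(1\,2)$ and using $R^{\nabla}(Y,X)=-R^{\nabla}(X,Y)$ collapses the factor $\tfrac{1}{2!1!}$ against the doubling, leaving the three-term sum $R^{\nabla}(X_1,X_2)(A(X_3))+R^{\nabla}(X_2,X_3)(A(X_1))+R^{\nabla}(X_3,X_1)(A(X_2))$. Note that no Bianchi identity is needed here, since $A$ sits in the third slot and is not being cycled with the first two; rewriting the last summand as $-R^{\nabla}(X_1,X_3)(A(X_2))$ gives the first three terms of \eqref{u1}. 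For the torsion term, apply Lemma \ref{wed} with $\beta=d^{\nabla}A\in\Omega^2(M,T_M)$ and $\alpha=A\in\Omega^1(M,T_M)$: this immediately yields $(d^{\nabla}A\wedge(d^{\nabla}A\wedge A))(X_1,X_2,X_3)=\tfrac12\big[d^{\nabla}A(d^{\nabla}A(X_1,X_2),A(X_3))+d^{\nabla}A(d^{\nabla}A(X_2,X_3),A(X_1))-d^{\nabla}A(d^{\nabla}A(X_1,X_3),A(X_2))\big]$, and multiplying by $4$ produces the bracket with coefficient $2$ in \eqref{u1}. Adding the two pieces and equating to zero is precisely \eqref{u1}.

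For the Levi-Civita statement, specialize $\nabla$ to the Levi-Civita connection of $g$ and pair \eqref{u1} with a fourth vector $X_4$ using $g$. With the convention $Rm(X,Y,Z,W)=g(R^{\nabla}(X,Y)Z,W)$, each term $g(R^{\nabla}(X_i,X_j)(A(X_k)),X_4)$ becomes $Rm(X_i,X_j,A(X_k),X_4)$, while the $d^{\nabla}A$ terms are simply paired with $X_4$; this is \eqref{u2}. Since $g$ is nondegenerate, \eqref{u2} for all $X_4$ is equivalent to \eqref{u1}, so nothing is lost — the metric form merely records the obstruction as a genuine $(0,4)$-tensor identity, which is the shape most convenient for the $S^6$ computations in the sequel.

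The argument is entirely bookkeeping; the only care needed is in tracking the combinatorial constants (the $\tfrac{1}{2!1!}$ in the endomorphism-module product, the $\tfrac12$ supplied by Lemma \ref{wed}, and the overall $4$ from Remark \ref{sl}) so that the coefficient $2$ in front of the $d^{\nabla}A$ bracket emerges correctly, and in fixing the sign convention for $Rm$ once and for all so that \eqref{u2} is consistent with \eqref{u1}. I do not anticipate any substantive obstacle.
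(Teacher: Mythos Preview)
Your proposal is correct and follows essentially the same route as the paper: invoke Remark \ref{sl}, expand $(R^{\nabla}\wedge A)(X_1,X_2,X_3)$ using the antisymmetry of $R^{\nabla}$, apply Lemma \ref{wed} with $\beta=d^{\nabla}A$, $\alpha=A$, and then pair with $X_4$ via $g$ for \eqref{u2}. The only difference is that you spell out the integrability step $I^{\nabla}_A=0\Rightarrow d^{\nabla}I^{\nabla}_A=0$ and the combinatorial bookkeeping a bit more explicitly than the paper does.
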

\begin{proof}
Since \[(R^{\nabla} \wedge A)(X_1,X_2,X_3)=R^{\nabla}(X_1,X_2)(A(X_3))+R^{\nabla}(X_2,X_3)(A(X_1))-R^{\nabla}(X_1,X_3)(A(X_2)),\] equation \ref{u1} follows from Remark \ref{sl}, \[\big(R^{\nabla} \wedge A+4d^{\nabla} A \wedge (d^{\nabla} A \wedge A)\big)(X_1,X_2,X_3)=0,\] and Lemma \ref{wed} (with $\beta=d^{\nabla} A,$ $\alpha=A$). Equation \ref{u2} follows from writing out the left hand side of \[g \Big(\big(R^{\nabla} \wedge A+4d^{\nabla} A \wedge (d^{\nabla} A \wedge A)\big)(X_1,X_2,X_3),X_4\Big)=0.\]
\end{proof}

The first observation in this note, relating curvature and integrability is stated below.

\begin{corollary}\label{cor}
Let $(M,g)$ be an almost-complex, Riemannian manifold of constant sectional curvature $c.$ If $A \in AC(M)$ is integrable and $\nabla$ is the Levi-Civita connection of $g,$ then 
\begin{equation}\label{u3}
\begin{split}
&c\big(g(X_2,A(X_3))X_1- g(A(X_2),X_3)X_1-g(X_1,A(X_3))X_2+g(A(X_1),X_3)X_2+\\
&g(X_1,A(X_2))X_3- g(A(X_1),X_2)X_3\big)+2\Big[d^{\nabla} A\big(d^{\nabla}A (X_1,X_2),A(X_3)\big)+\\
&d^{\nabla} A\big(d^{\nabla} A(X_2,X_3),A(X_1)\big)-d^{\nabla} A\big(d^{\nabla} A(X_1,X_3),A(X_2)\big)\Big]=0,
\end{split}
\end{equation}
and
\begin{equation}\label{u4}
\begin{split}
&c\big(g(X_2,A(X_3))g(X_1,X_4)- g(A(X_2),X_3)g(X_1,X_4)-g(X_1,A(X_3))g(X_2,X_4)+g(A(X_1),X_3)g(X_2,X_4)+\\
&g(X_1,A(X_2))g(X_3,X_4)- g(A(X_1),X_2)g(X_3,X_4)\big)+2\Big[g\Big(d^{\nabla} A\big(d^{\nabla}A (X_1,X_2),A(X_3)\big),X_4\Big)+\\
&g\Big(d^{\nabla} A\big(d^{\nabla}A (X_2,X_3),A(X_1)\big),X_4\Big)-g\Big(d^{\nabla} A\big(d^{\nabla}A (X_1,X_3),A(X_2)\big),X_4\Big)\Big]=0.
\end{split}
\end{equation}
\end{corollary}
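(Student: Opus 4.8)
The plan is to substitute the constant–sectional–curvature form of the Riemann tensor directly into equations \ref{u1} and \ref{u2} of Proposition \ref{metacx}. Recall that $(M,g)$ has constant sectional curvature $c$ precisely when $R^{\nabla}(X,Y)Z = c\big(g(Y,Z)X - g(X,Z)Y\big)$ for all vector fields, equivalently $Rm(X,Y,Z,W) = c\big(g(Y,Z)g(X,W) - g(X,Z)g(Y,W)\big)$ in the $(0,4)$ convention $Rm(X,Y,Z,W)=g\big(R^{\nabla}(X,Y)Z,W\big)$ used above. Since Proposition \ref{metacx} already holds for the Levi-Civita connection of $g$ with no hypothesis on the curvature, and since $A$ is assumed integrable, all that remains is to evaluate its curvature terms under this extra assumption; the two $d^{\nabla}A$ brackets carry over verbatim.

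First I would apply the formula to the three summands $R^{\nabla}(X_1,X_2)(A(X_3))$, $R^{\nabla}(X_2,X_3)(A(X_1))$, and $-R^{\nabla}(X_1,X_3)(A(X_2))$ that make up $(R^{\nabla}\wedge A)(X_1,X_2,X_3)$, taking $Z=A(X_3),\,A(X_1),\,A(X_2)$ respectively. This produces $c\big(g(X_2,A(X_3))X_1 - g(X_1,A(X_3))X_2\big)$, $c\big(g(X_3,A(X_1))X_2 - g(X_2,A(X_1))X_3\big)$, and $-c\big(g(X_3,A(X_2))X_1 - g(X_1,A(X_2))X_3\big)$. Collecting these six terms by the coefficients of $X_1$, $X_2$, $X_3$ and writing $g(X_i,A(X_j))$ as $g(A(X_j),X_i)$ where convenient reproduces exactly the curvature bracket of \ref{u3} (note that no orthogonality of $A$ is used, so all six terms survive); adjoining the unchanged $d^{\nabla}A$ brackets from \ref{u1} gives \ref{u3}.

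For \ref{u4} I would run the same substitution on \ref{u2}, using the $(0,4)$ identity on $Rm(X_1,X_2,A(X_3),X_4)$, $Rm(X_2,X_3,A(X_1),X_4)$, and $-Rm(X_1,X_3,A(X_2),X_4)$, which yields the same six products now each multiplied by the appropriate $g(X_k,X_4)$. Equivalently, \ref{u4} is obtained from \ref{u3} simply by pairing both sides with $X_4$ via $g$ and using bilinearity, since the $d^{\nabla}A$ terms of \ref{u2} are literally the images under $g(\,\cdot\,,X_4)$ of those in \ref{u1}. Either route is mechanical.

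The only point requiring care is bookkeeping: fixing the sign and index conventions for $R^{\nabla}$ and $Rm$ so that they agree with those implicit in Proposition \ref{metacx} and with the first Bianchi identity invoked in Lemma \ref{simple}, and tracking all six curvature terms without dropping a sign. There is no conceptual obstacle — the mathematical content of the corollary is entirely contained in Proposition \ref{metacx}; this step merely records what the constant-curvature hypothesis does to that identity, in the explicit form that will be convenient for the $S^6$ computations of the following sections.
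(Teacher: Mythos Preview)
Your proposal is correct and follows exactly the paper's own approach: substitute the standard constant-sectional-curvature formula for $R^{\nabla}$ into equation \ref{u1} of Proposition \ref{metacx} to obtain \ref{u3}, and then take the $g$-inner product with $X_4$ (or equivalently substitute into \ref{u2}) to obtain \ref{u4}. The paper's proof is in fact terser than yours, citing the curvature formula and equation \ref{u1} without writing out the six terms, but the content is identical.
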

\begin{proof}
Equation \ref{u3} follows from the well-known formula for the curvature $R^{\nabla}$ of a constant sectional curvature manifold (see, for instance, Lemma 8.10 \cite{Lee}) together with equation \ref{u1}. Then, \ref{u4} is the $g$-inner product of \ref{u3} with $X_4.$
\end{proof}

\section{Interfacing the constant curvature and almost-hermitian geometries}
For any Riemannian metric $g$ on $M,$ let \[AC(M)_g:=\{J \in AC(M) \mid g(JX,JY)=g(X,Y), \forall X, Y \in \mathfrak{X}(M)\}\] be the space of all $g$-orthogonal almost-complex structures. For instance, if $(S^6, g)$ is the round $6$-sphere, then $AC(S^6)_g \neq \emptyset;$ e.g.\ the standard octonion almost-complex structure on $S^6$ is orthogonal. Let $\Omega_A:=g(A(\cdot),\cdot),$ for $A \in AC(M)_g,$ denote the fundamental $2$-form of the almost-hermitian manifold $(M,A,g).$

\begin{corollary}\label{cor}
If $(M,g)$ is an almost-complex Riemannian manifold of constant sectional curvature $c,$ $A \in AC(M)_g$ is integrable (i.e.\ $(M,A,g)$ is hermitian), and if $\nabla$ is the Levi-Civita connection of $g,$ then 
\begin{equation}\label{u5}
\begin{split}
&c\big(\Omega_A(X_1,X_3)X_2- \Omega_A(X_1,X_2)X_3-\Omega_A(X_2,X_3)X_1\big)+d^{\nabla} A\big(d^{\nabla} A(X_1,X_2),A(X_3)\big)+\\
&d^{\nabla} A\big(d^{\nabla} A(X_2,X_3),A(X_1)\big)-d^{\nabla} A\big(d^{\nabla} A(X_1,X_3),A(X_2)\big)=0,
\end{split}
\end{equation}
and
\begin{equation}\label{u6}
\begin{split}
&c\big(\Omega_A(X_1,X_3)g(X_2,X_4)- \Omega_A(X_1,X_2)g(X_3,X_4)-\Omega_A(X_2,X_3)g(X_1,X_4)\big)+\\
&g\Big(d^{\nabla} A\big(d^{\nabla}A (X_1,X_2),A(X_3)\big),X_4\Big)+g\Big(d^{\nabla} A\big(d^{\nabla}A (X_2,X_3),A(X_1)\big),X_4\Big)\\
&-g\Big(d^{\nabla} A\big(d^{\nabla}A (X_1,X_3),A(X_2)\big),X_4\Big)=0.
\end{split}
\end{equation}
\end{corollary}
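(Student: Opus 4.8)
The plan is to derive equations \eqref{u5} and \eqref{u6} directly from equations \eqref{u3} and \eqref{u4} of the preceding corollary, the one new ingredient being the orthogonality hypothesis $A\in AC(M)_g$, which allows the six curvature terms to be repackaged in terms of the fundamental $2$-form $\Omega_A$.

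First I would record the elementary fact that a $g$-orthogonal almost-complex structure is $g$-skew-symmetric: since $A^2=-\mathrm{Id}$, for all $X,Y\in\mathfrak{X}(M)$ we have $g(A(X),Y)=g\big(A(X),-A(A(Y))\big)=-g\big(A(X),A(A(Y))\big)=-g(X,A(Y))$, the last equality by $g$-orthogonality applied to the pair $(X,A(Y))$. In particular $\Omega_A(X,Y)=g(A(X),Y)=-g(X,A(Y))=-\Omega_A(Y,X)$, so $\Omega_A$ is genuinely alternating. This skew-symmetry is the only structural input beyond equations \eqref{u3}--\eqref{u4}.

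Next I would substitute $g(X_i,A(X_j))=g(A(X_j),X_i)=\Omega_A(X_j,X_i)=-\Omega_A(X_i,X_j)$ and $g(A(X_i),X_j)=\Omega_A(X_i,X_j)$ into the curvature part of \eqref{u3}. The six terms then group into three pairs: $g(X_2,A(X_3))X_1-g(A(X_2),X_3)X_1=-2\Omega_A(X_2,X_3)X_1$, then $-g(X_1,A(X_3))X_2+g(A(X_1),X_3)X_2=2\Omega_A(X_1,X_3)X_2$, and $g(X_1,A(X_2))X_3-g(A(X_1),X_2)X_3=-2\Omega_A(X_1,X_2)X_3$. Hence the curvature part of \eqref{u3} equals $2c\big(\Omega_A(X_1,X_3)X_2-\Omega_A(X_1,X_2)X_3-\Omega_A(X_2,X_3)X_1\big)$, and dividing \eqref{u3} through by $2$ yields exactly \eqref{u5}. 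Equation \eqref{u6} then follows either by taking the $g$-inner product of \eqref{u5} against $X_4$, or, equivalently, by running the identical substitution inside \eqref{u4}.

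There is no genuine obstacle here; the argument is a short exercise in sign bookkeeping. The only point demanding care is the orientation convention in $\Omega_A=g(A(\cdot),\cdot)$ together with the consistent use of the antisymmetry of $\Omega_A$ across all three pairs, since a single sign error would corrupt the relative coefficients of the three $\Omega_A$-terms in \eqref{u5}. No analytic or geometric difficulty is involved.
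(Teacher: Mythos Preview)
Your proposal is correct and follows essentially the same approach as the paper: both use the $g$-skew-symmetry of $A$ (from orthogonality) to collapse the six curvature terms into three $\Omega_A$-terms with an overall factor of $2$, then divide \eqref{u3} by $2$ to obtain \eqref{u5} and pair with $X_4$ to get \eqref{u6}. Your exposition is slightly more explicit about deriving the skew-symmetry $g(A(X),Y)=-g(X,A(Y))$, but the argument is otherwise identical.
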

\begin{proof}
Observe that since $A$ is $g$-orthogonal, 
\begin{equation}
\begin{split}
&g(X_2,A(X_3))X_1- g(A(X_2),X_3)X_1-g(X_1,A(X_3))X_2+g(A(X_1),X_3)X_2+\\
&g(X_1,A(X_2))X_3- g(A(X_1),X_2)X_3=2\big[g(X_2,A(X_3))X_1-g(X_1,A(X_3))X_2+\\
&g(X_1,A(X_2))X_3 \big]=2\big[g(A(X_1),X_3)X_2-g(A(X_1),X_2)X_3-g(A(X_2),X_3)X_1\big]=\\
&2\big[\Omega_A(X_1,X_3)X_2- \Omega_A(X_1,X_2)X_3-\Omega_A(X_2,X_3)X_1\big].
\end{split}
\end{equation}
So equation \ref{u3} becomes equation \ref{u5} (divide by $2$). Then, equation \ref{u6} follows from inner producting \ref{u5} and $X_4$ with $g.$
\end{proof}

\subsection{A first simplification}
In the spirit of \cite{Gray} (cf.\ section 2), one may regard a form $P \in \Omega^2(M,T_M)$ as a map $P:\bigwedge^2 T_M \to T_M,$ and thus write $P(X,Y)=P(X \wedge Y).$ As such, $P$ can be extended to a map $P:\bigwedge^{k+1} T_M \to \bigwedge^{k} T_M,$ for any $k >0,$ in the following way:

\[P(\zeta_1 \wedge \dots \wedge \zeta_{k+1})=\sum_{1 \leq i \leq j \leq k+1} (-1)^{i+j+1} P(\zeta_i \wedge \zeta_j)\wedge \zeta_1 \wedge \dots \wedge \widehat{\zeta_i} \wedge \dots \wedge \widehat{\zeta_j} \wedge \dots \wedge \zeta_{k+1}.\] Indeed, if $k=2,$ $P:\bigwedge^3 T_M \to \bigwedge^2 T_M,$ and 
\begin{equation*}
\begin{split}
P(\zeta_1 \wedge \zeta_2 \wedge \zeta_3)&=P(\zeta_1 \wedge \zeta_2) \wedge \zeta_3-P(\zeta_1 \wedge \zeta_3) \wedge \zeta_2+P(\zeta_2 \wedge \zeta_3) \wedge \zeta_1.
\end{split}
\end{equation*}
Thus, $P^2=P \circ P:\bigwedge^3 T_M \to T_M,$ and clearly 
\begin{equation*}
\begin{split}
P^2(\zeta_1 \wedge \zeta_2 \wedge \zeta_3)&=P(P(\zeta_1 \wedge \zeta_2) \wedge \zeta_3)-P(P(\zeta_1 \wedge \zeta_3) \wedge \zeta_2)+P(P(\zeta_2 \wedge \zeta_3) \wedge \zeta_1).
\end{split}
\end{equation*}
Now, let $A \in AC(M).$ Then, since $d^{\nabla} A \in \Omega^2(M,T_M),$ by the above discussion, $(d^{\nabla} A)^2:\bigwedge^3 T_M \to T_M,$ and so
\begin{equation*}
\begin{split}
(d^{\nabla} A)^2(X_1 \wedge X_2 \wedge X_3)=d^{\nabla} A\big(d^{\nabla} A(X_1,X_2),X_3\big)-d^{\nabla} A\big(d^{\nabla} A(X_1,X_3),X_2\big)+d^{\nabla} A\big(d^{\nabla} A(X_2,X_3),X_1\big).
\end{split}
\end{equation*}

\begin{proposition}\label{pfirst}
If $(M,g)$ is an almost-complex Riemannian manifold of constant sectional curvature $c,$ $A \in AC(M)_g$ is integrable, and if $\nabla$ is the Levi-Civita connection of $g,$ then
\begin{equation*}\label{oro}
\begin{split}
g\big((d^{\nabla}A)^2(X_1 \wedge X_2 \wedge X_3),X_4\big)+\frac{c}{2}(\Omega_A \wedge \Omega_A)(X_1,X_2,X_3,X_4)=0.
\end{split}
\end{equation*}
\end{proposition}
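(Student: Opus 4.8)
The plan is to start from equation \eqref{u6} of the preceding corollary, which already expresses the obstruction for a $g$-orthogonal integrable $A$ on a constant-curvature manifold in a fully scalar (inner-producted) form, and to recognize both the curvature term and the $(d^\nabla A)^2$ term inside it. First I would rewrite the three Nijenhuis-type summands
\[
g\big(d^{\nabla} A(d^{\nabla}A (X_1,X_2),A(X_3)),X_4\big)+g\big(d^{\nabla} A(d^{\nabla}A (X_2,X_3),A(X_1)),X_4\big)-g\big(d^{\nabla} A(d^{\nabla}A (X_1,X_3),A(X_2)),X_4\big).
\]
The goal is to massage this into $g\big((d^\nabla A)^2(X_1\wedge X_2\wedge X_3), A(X_4)\big)$ or a close variant. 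Recall $(d^\nabla A)^2(X_1\wedge X_2\wedge X_3) = d^\nabla A(d^\nabla A(X_1,X_2),X_3) - d^\nabla A(d^\nabla A(X_1,X_3),X_2) + d^\nabla A(d^\nabla A(X_2,X_3),X_1)$; the term appearing in \eqref{u6} has the last slot fed $A(X_i)$ rather than $X_i$, so I expect to need the $A$-invariance of $d^\nabla A$ (equivalently the integrability criterion from Lemma~\ref{Lem1}, $d^\nabla A(AX,AY)=d^\nabla A(X,Y)$) together with $A^2=-\mathrm{Id}$ and $g$-orthogonality to trade an $A$ in the last slot for $A$'s in the first two slots, and then to pull the remaining endomorphism across the inner product using $g(A\xi,\eta) = -g(\xi,A\eta)$. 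Tracking signs through the cyclic relabeling is the fiddly part.

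Second, I would handle the curvature term: show that
\[
c\big(\Omega_A(X_1,X_3)g(X_2,X_4)- \Omega_A(X_1,X_2)g(X_3,X_4)-\Omega_A(X_2,X_3)g(X_1,X_4)\big)
\]
equals $\tfrac{c}{2}(\Omega_A\wedge\Omega_A)(X_1,X_2,X_3,X_4)$ up to the same manipulation (an $A$ applied to $X_4$, i.e.\ replacing $g(X_i,X_4)$ by $\Omega_A(X_i, X_4)$ after pushing an $A$ through). For this I need the explicit formula for the product of two $T_M$-valued forms specialized to scalar-valued $2$-forms, i.e.\ $(\Omega_A\wedge\Omega_A)(X_1,X_2,X_3,X_4)$ expanded as a signed sum over the $\binom{4}{2}$-type shuffles; using $\Omega_A$'s antisymmetry this collapses to $2\big(\Omega_A(X_1,X_2)\Omega_A(X_3,X_4)-\Omega_A(X_1,X_3)\Omega_A(X_2,X_4)+\Omega_A(X_1,X_4)\Omega_A(X_2,X_3)\big)$. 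Then I must match $\Omega_A(X_i,X_4)$ against the $g(X_i,X_4)$ appearing in \eqref{u6}, which again is exactly the bookkeeping of applying $A$ to the fourth argument everywhere simultaneously.

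So the actual structure of the proof is: take \eqref{u6}, apply $A$ to the slot $X_4$ throughout (legitimate since \eqref{u6} holds for all vector fields, in particular with $X_4$ replaced by $A(X_4)$), use $g(A\cdot,A\cdot)=g(\cdot,\cdot)$ and $g(A\cdot,\cdot)=-g(\cdot,A\cdot)$ and the $A$-invariance of $d^\nabla A$ to turn each $g(\,\cdot\,,A(X_4))$ into either $g(\,\cdot\,,X_4)$ modulo a sign or a $\Omega_A$, thereby converting the Nijenhuis block into $g\big((d^\nabla A)^2(X_1\wedge X_2\wedge X_3),X_4\big)$ and the curvature block into $\tfrac{c}{2}(\Omega_A\wedge\Omega_A)(X_1,X_2,X_3,X_4)$. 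I expect the main obstacle to be purely combinatorial: getting every sign right when simultaneously (i) cyclically permuting the indices $1,2,3$, (ii) moving $A$'s in and out of $d^\nabla A$ via integrability, and (iii) transferring $A$ across $g$. A secondary subtlety is confirming that $d^\nabla A(d^\nabla A(\cdot,\cdot),A(\cdot))$ really does reduce — after substituting $X_4\mapsto A(X_4)$ and using $d^\nabla A(AX,AY)=d^\nabla A(X,Y)$ — to the clean $A$-free $(d^\nabla A)^2$; if it does not reduce perfectly one may instead need to substitute $X_i \mapsto A(X_i)$ for $i=1,2,3$ as well and invoke the identity at the level of the full alternation. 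I would organize the write-up as two displayed computations (curvature block, Nijenhuis block) followed by a one-line assembly, citing Lemma~\ref{Lem1}, Lemma~\ref{wed}, and Corollary~\ref{cor}.
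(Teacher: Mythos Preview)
Your fallback is the right move, and it is exactly what the paper does: the paper substitutes $X_i\mapsto A(X_i)$ for $i=1,2,3$ into the vector-valued equation~\eqref{u5} (equivalently into~\eqref{u6} with $X_4$ untouched), so that the inner $d^\nabla A(AX_i,AX_j)$ collapses to $d^\nabla A(X_i,X_j)$ by $A$-invariance and the outer argument $A(AX_k)=-X_k$ loses its $A$ for free. One then dots with $X_4$; the curvature block becomes $-\tfrac{c}{2}\,\Omega_A\wedge\Omega_A$ via $g(AX_k,X_4)=\Omega_A(X_k,X_4)$, and the Nijenhuis block becomes $-g\big((d^\nabla A)^2(X_1\wedge X_2\wedge X_3),X_4\big)$ on the nose. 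No residual terms, no need for Lemma~\ref{wed} or the later identities~\eqref{leap1}--\eqref{leap2}.

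Your primary route, replacing only $X_4$ by $A(X_4)$, does handle the curvature block correctly but genuinely fails on the Nijenhuis block. After that substitution a typical summand is $g\big(d^\nabla A(d^\nabla A(X_1,X_2),AX_3),AX_4\big)$, and the only tool you have is $d^\nabla A(AU,AV)=d^\nabla A(U,V)$; since the first slot $d^\nabla A(X_1,X_2)$ carries no $A$, you cannot strip the $A$ off $X_3$ without introducing one elsewhere. If you push through using $(\nabla_{AX}A)Y=A(\nabla_XA)Y$ and $(\nabla_XA)(AY)=-A(\nabla_XA)Y$, the term differs from the desired $g\big(d^\nabla A(d^\nabla A(X_1,X_2),X_3),X_4\big)$ by $-2\,g\big((\nabla_{d^\nabla A(X_1,X_2)}A)X_3,X_4\big)$, and these residuals do not cancel over the three cyclic summands. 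Also note that doing all four substitutions simultaneously (``as well'') returns the curvature block to its original $g(X_i,X_4)$ form and gains nothing. So the clean choice is $i=1,2,3$ only, as in the paper.
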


\begin{proof}
Set
\begin{equation*}\label{ultra}
\begin{split}
h(X_1,X_2,X_3)&:=c\big(\Omega_A(X_1,X_3)X_2- \Omega_A(X_1,X_2)X_3-\Omega_A(X_2,X_3)X_1\big)+d^{\nabla} A\big(d^{\nabla} A(X_1,X_2),A(X_3)\big)+\\
&d^{\nabla} A\big(d^{\nabla} A(X_2,X_3),A(X_1)\big)-d^{\nabla} A\big(d^{\nabla} A(X_1,X_3),A(X_2)\big).
\end{split}
\end{equation*}
By Corollary \ref{cor}, equation \ref{u5}, $h(X_1,X_2,X_3)=0,$ for all $X_1, X_2, X_3 \in \mathfrak{X}(M).$ Thus, in particular, 
\begin{equation*}
\begin{split}
0&=h(AX_1,AX_2,AX_3)\\
&=c\big(\Omega_A(X_1,X_3)(A(X_2))- \Omega_A(X_1,X_2)(A(X_3))-\Omega_A(X_2,X_3)(A(X_1))\big)-d^{\nabla} A\big(d^{\nabla} A(X_1,X_2),X_3\big)\\
&-d^{\nabla} A\big(d^{\nabla} A(X_2,X_3),X_1\big)+d^{\nabla} A\big(d^{\nabla} A(X_1,X_3),X_2\big).
\end{split}
\end{equation*}
After recognizing that \[(\Omega_A \wedge \Omega_A)(X_1,X_2,X_3,X_4)=2\big(\Omega(X_1,X_2)\Omega(X_3,X_4)-\Omega(X_1,X_3)\Omega(X_2,X_4)+\Omega(X_2,X_3)\Omega(X_1,X_4)\big),\] the claimed equation follows from expanding the left hand side of \[g\big(h(AX_1,AX_2,AX_3),X_4\big)=0,\] and the definition of $(d^{\nabla} A)^2.$ 
\end{proof}
This simplification, though more compact, can not be so easily manipulated. However, it can be taken a step forward in the following way. 

\subsection{A second simplification and its geometric consequences}
Let us introduce some convenient notation. For $g$ any Riemannian metric on $M,$ consider the bilinear mapping \[\wedge_g:\Omega^k(M,T_M)\otimes \Omega^l(M,T_M) \to \Omega^{k+l}(M),\] where \[(\alpha \wedge_g \beta)(X_,\dots,X_{k+l})=\frac{1}{k! l!} \sum_{\sigma \in S_{k+l}} sign(\sigma)g\big(\alpha(X_{\sigma(1)},\dots,X_{\sigma(k)}, \beta(X_{\sigma(k+1)},\dots,X_{\sigma(k+l)})\big).\] For example, 
\begin{equation}\label{why}
\begin{split}
(d^{\nabla} A \wedge_g d^{\nabla} A)(X_1,X_2,X_3,X_4)&=2\big[g\big(d^{\nabla} A(X_1,X_2), d^{\nabla} A(X_3,X_4)\big)-g\big(d^{\nabla} A(X_1,X_3), d^{\nabla} A(X_2,X_4)\big)+\\
&g\big(d^{\nabla} A(X_2,X_3), d^{\nabla} A(X_1,X_4)\big)\big].
\end{split}
\end{equation}

This final subsection is devoted to proving

\begin{theorem}\label{2sin}
If $(M,g)$ is an almost-complex Riemannian manifold of constant sectional curvature $c,$ $A \in AC(M)_g$ is integrable, and if $\nabla$ is the Levi-Civita connection of $g,$ then \[d^{\nabla} A \wedge_g d^{\nabla} A+2c \Omega_A \wedge \Omega_A=0.\]
\end{theorem}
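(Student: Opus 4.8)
The plan is to build on Proposition \ref{pfirst}. That proposition already records
\[
g\big((d^{\nabla}A)^2(X_1\wedge X_2\wedge X_3),X_4\big)=-\frac{c}{2}\,(\Omega_A\wedge\Omega_A)(X_1,X_2,X_3,X_4),
\]
so the whole theorem reduces to proving the purely almost-Hermitian identity
\[
(d^{\nabla}A\wedge_g d^{\nabla}A)(X_1,X_2,X_3,X_4)=4\,g\big((d^{\nabla}A)^2(X_1\wedge X_2\wedge X_3),X_4\big)
\]
for every integrable $A\in AC(M)_g$ with Levi-Civita connection $\nabla$ (no curvature hypothesis is needed for this). Granting it, one multiplies the displayed equation of Proposition \ref{pfirst} by $4$, substitutes, recalls the expansion in equation \ref{why} together with the formula for $\Omega_A\wedge\Omega_A$ from the proof of Proposition \ref{pfirst}, and reads off $d^{\nabla}A\wedge_g d^{\nabla}A+2c\,\Omega_A\wedge\Omega_A=0$.

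To prove the identity I would first record two elementary facts valid because $A$ is $g$-orthogonal and $\nabla$ is metric: (i) each endomorphism $(\nabla_VA)$ is $g$-skew-adjoint, $g((\nabla_VA)X,Y)=-g(X,(\nabla_VA)Y)$, obtained by differentiating $g(AX,Y)=-g(X,AY)$; and (ii) since $\nabla$ is torsion-free, $d^{\nabla}A(V,W)=(\nabla_VA)W-(\nabla_WA)V$, hence $g(d^{\nabla}A(V,W),Z)=(\nabla_V\Omega_A)(W,Z)-(\nabla_W\Omega_A)(V,Z)$ with $(\nabla_V\Omega_A)(W,Z)=g((\nabla_VA)W,Z)$. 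Writing $D:=d^{\nabla}A$ and $D_{ij}:=D(X_i,X_j)$, I then expand the inner composition in a local $g$-orthonormal frame $\{e_a\}$ as $g\big(D(D_{ij},X_k),X_4\big)=\sum_a g(D_{ij},e_a)\,g(D(e_a,X_k),X_4)$, apply (ii) to the last factor and then (i) to the resulting $(\nabla_{X_k}\Omega_A)(D_{ij},X_4)$, and split $(\nabla_{X_k}A)X_4$ into its skew part $\frac12 D(X_k,X_4)$ and its symmetric part $\frac12\big((\nabla_{X_k}A)X_4+(\nabla_{X_4}A)X_k\big)$. This yields
\[
g\big(D(D_{ij},X_k),X_4\big)=\frac12\,g(D_{ij},D_{k4})+(\nabla_{D_{ij}}\Omega_A)(X_k,X_4)+\frac12\,g\big(D_{ij},(\nabla_{X_k}A)X_4+(\nabla_{X_4}A)X_k\big),
\]
and summing over $(ij\mid k)\in\{(12\mid 3),(13\mid 2),(23\mid 1)\}$ with signs $+,-,+$ the first terms assemble, via equation \ref{why}, into exactly $\frac14(d^{\nabla}A\wedge_g d^{\nabla}A)(X_1,X_2,X_3,X_4)$.

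The crux is then to show that the remaining alternating sum of the $(\nabla_{D_{ij}}\Omega_A)$-terms and the symmetric-part terms vanishes; this is where integrability is essential — it is vacuous in the K\"ahler case $\nabla A\equiv 0$. I would rewrite that leftover sum in terms of $d\Omega_A(D_{ij},X_k,X_4)$ plus skew-adjoint rearrangements, and then use the vanishing of the Nijenhuis tensor — equivalently the $A$-invariance of $d^{\nabla}A$ from Lemma \ref{Lem1} — to trade $d^{\nabla}A(AX,AY)$ for $d^{\nabla}A(X,Y)$ and collapse the symmetric contributions, with the first Bianchi identity (used exactly as in the proof of Lemma \ref{simple}) disposing of any residual cyclic sums. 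Making this cancellation work cleanly is the main obstacle; once it is in hand, combining with Proposition \ref{pfirst} as above completes the proof.
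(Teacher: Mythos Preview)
Your reduction to the identity $(d^{\nabla}A\wedge_g d^{\nabla}A)=4\Phi_A$ is correct arithmetic, and your decomposition
\[
g\big(D(D_{ij},X_k),X_4\big)=(\nabla_{D_{ij}}\Omega_A)(X_k,X_4)+g\big(D_{ij},(\nabla_{X_k}A)X_4\big)
\]
is fine. But the step you flag as ``the main obstacle'' is exactly where the proof lives, and your sketch does not close it. Two remarks.

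First, the claim that the cancellation is a \emph{curvature-free} Hermitian identity is unjustified and is in tension with how the paper actually argues. Write your leftover as $R=B_1+\tfrac12(L+L')$ with
\[
B_1=\sum_{\mathrm{cyc}}(\nabla_{D_{ij}}\Omega_A)(X_k,X_4),\qquad L=\sum_{\mathrm{cyc}}g\big(D_{ij},(\nabla_{X_k}A)X_4\big),\qquad L'=\sum_{\mathrm{cyc}}g\big(D_{ij},(\nabla_{X_4}A)X_k\big).
\]
The paper proves $B_1=0$ in Lemma~\ref{care} and $L+L'=0$ via Theorem~\ref{aura}, and \emph{both} proofs use the constant-curvature hypothesis: they rely on the fact that $\Phi_A=-\tfrac{c}{2}\,\Omega_A\wedge\Omega_A$ is a genuine $4$-form and is $A$-invariant in all four slots. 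That $A$-invariance is what converts the computation $\phi_A(AX_1,\dots,AX_4)=-\phi_A(X_1,\dots,X_4)-2g((\nabla_{X_3}A)D_{12},X_4)$ into Lemma~\ref{care}, and the $4$-form property is what gives $\Phi_A(X,Y,Z,Z)=0$ and hence Theorem~\ref{aura}. Without constant curvature you have neither ingredient, so asserting $R=0$ ``for every integrable $A\in AC(M)_g$'' needs an argument you have not supplied. Your proposed tools do not obviously apply: the first Bianchi identity concerns $R^{\nabla}$, not $d^{\nabla}A$, and there is no curvature term left in the expression you are trying to kill; and the $A$-invariance $D(AX,AY)=D(X,Y)$ alone does not control terms like $(\nabla_{D_{ij}}\Omega_A)(X_k,X_4)$ or force $g(D_{ij},(\nabla_{X_k}A)X_4)$ to be skew in $k,4$.

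Second, even if the curvature-free claim happens to be true, the proposal as written is not a proof: the cancellation paragraph is a plan, not an argument. The paper's route is concrete and short once Lemma~\ref{care} and Theorem~\ref{aura} are in hand: polarize Theorem~\ref{aura} to get $g(D(X,Y),(\nabla_ZA)W)=-g(D(X,Y),(\nabla_WA)Z)$, feed this back into the Lemma~\ref{care} expression for $\Phi_A$, average the two resulting formulas to obtain $\Phi_A=\tfrac14(d^{\nabla}A\wedge_g d^{\nabla}A)$, and then invoke equation~\eqref{can}. If you want to salvage your approach, the honest fix is to use Proposition~\ref{pfirst} not only at the end but also at the start, to import the $4$-form/$A$-invariance of $\Phi_A$, and then your $B_1=0$ and $L+L'=0$ become exactly Lemma~\ref{care} and Theorem~\ref{aura}.
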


The constant curvature obstruction equation $d^{\nabla} A \wedge_g d^{\nabla} A+2c \Omega_A \wedge \Omega_A=0$ is a structure equation of sorts. It is somewhat reminiscent of, for example, the Maurer-Cartan equation of the Mauer-Cartan form $\omega$ on a Lie group: $d\omega+\frac{1}{2}[\omega, \omega]=0.$

For any $A \in AC(M),$ define a tensor field $\Phi_A$ by the equation \[\Phi_A(X,Y,Z,W):=g\big((d^{\nabla}A)^2(X \wedge Y \wedge Z), W\big).\] Note that the curvature obstruction equation from Proposition \ref{pfirst} can be rewritten as 
\begin{equation}\label{can}
\begin{split}
\Phi_A+\frac{c}{2}\Omega_A \wedge \Omega_A=0.
\end{split}
\end{equation}
The proof of Theorem \ref{2sin} will require an analysis of $\Phi_A$ when $A$ is a hermitian complex structure. In this case, observe that indeed $\Phi_A$ is a $4$-form because $\Phi_A=-\frac{c}{2}\Omega_A \wedge \Omega_A \in \Omega^4(M).$ 

\begin{lemma}\label{care}
If $(M,g)$ is an almost-complex Riemannian manifold of constant sectional curvature, $A \in AC(M)_g$ is integrable, and if $\nabla$ is the Levi-Civita connection of $g,$ then 
\begin{equation}\label{creation}
\begin{split}
\Phi_A(X_1,X_2,X_3,X_4)&=g\big(d^{\nabla}A(X_1,X_2), (\nabla_{X_3} A)X_4\big)-g\big(d^{\nabla}A(X_1,X_3), (\nabla_{X_2} A)X_4\big)+\\
&g\big(d^{\nabla}A(X_2,X_3), (\nabla_{X_1} A)X_4\big).
\end{split}
\end{equation}
\end{lemma}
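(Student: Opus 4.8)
The plan is to compute $\Phi_A$ directly from the definition of $(d^\nabla A)^2$, extract the right-hand side of \eqref{creation} using only the $g$-orthogonality of $A$, and then reduce everything to one vanishing identity in which the integrability of $A$ does the real work. Three pointwise facts will be used. Since $\nabla$ is torsion-free, $d^\nabla A(U,V)=(\nabla_U A)V-(\nabla_V A)U$ for all $U,V\in\mathfrak{X}(M)$. Differentiating $A^2=-\mathrm{Id}$ gives $(\nabla_Z A)\circ A+A\circ(\nabla_Z A)=0$, and differentiating $g(AP,Q)=-g(P,AQ)$ (valid because $A\in AC(M)_g$) together with $\nabla g=0$ shows each $\nabla_Z A$ is $g$-skew-adjoint, $g((\nabla_Z A)P,Q)=-g(P,(\nabla_Z A)Q)$; in particular $\nabla_Z\Omega_A$ is a $2$-form. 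Finally, $N_A=0$ (integrability), in the presence of $\nabla g=0$ and torsion-freeness, is equivalent to the $A$-invariance of $d^\nabla A$ and to the identity $(\nabla_{AX}A)Y-(\nabla_{AY}A)X=A\,d^\nabla A(X,Y)$.

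Next I would expand $\Phi_A(X_1,X_2,X_3,X_4)=g\big((d^\nabla A)^2(X_1\wedge X_2\wedge X_3),X_4\big)$ using the three-term formula for $(d^\nabla A)^2$, and split each outer factor as $d^\nabla A(d^\nabla A(X_i,X_j),X_k)=(\nabla_{d^\nabla A(X_i,X_j)}A)X_k-(\nabla_{X_k}A)\big(d^\nabla A(X_i,X_j)\big)$. Pairing the second kind of summand with $X_4$ and moving $\nabla_{X_k}A$ across by skew-adjointness produces $g\big(d^\nabla A(X_i,X_j),(\nabla_{X_k}A)X_4\big)$, and (using $d^\nabla A(X_1,X_3)=-d^\nabla A(X_3,X_1)$) the three such terms are exactly the right-hand side of \eqref{creation}. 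Hence \eqref{creation} is equivalent to the vanishing of the leftover
\[(\nabla_{d^\nabla A(X_1,X_2)}A)X_3+(\nabla_{d^\nabla A(X_2,X_3)}A)X_1+(\nabla_{d^\nabla A(X_3,X_1)}A)X_2=0.\]

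The crux is this last identity, and it is here that integrability must be used essentially (the orthogonality content has been spent). My plan for it is to write each $d^\nabla A(X_i,X_j)=-A\big(A\,d^\nabla A(X_i,X_j)\big)$, so that each summand takes the shape $(\nabla_{AW}A)Z$ with $W=-A\,d^\nabla A(X_i,X_j)$ and $Z=X_k$, apply the Nijenhuis identity in the form $(\nabla_{AW}A)Z=(\nabla_{AZ}A)W+A\,d^\nabla A(W,Z)$ to each, and then regroup using the $A$-invariance of $d^\nabla A$ and skew-adjointness. Purely formal juggling of the Nijenhuis relation only reshuffles the contracted tensors and does not by itself close the argument, so at this stage I expect to call on the constant-sectional-curvature obstructions already in hand — Proposition \ref{pfirst} (equivalently \eqref{can}), which evaluates $(d^\nabla A)^2(X_1\wedge X_2\wedge X_3)$ against $\Omega_A$, and Corollary \ref{cor} / equation \eqref{u5}, which evaluates $\sum_{\mathrm{cyc}}d^\nabla A(d^\nabla A(X_1,X_2),AX_3)$ — to pin down the remaining pieces; an alternative is to expand $d^\nabla A(X_i,X_j)=(\nabla_{X_i}A)X_j-(\nabla_{X_j}A)X_i$ and cancel the six terms $(\nabla_{(\nabla_{X_i}A)X_j}A)X_k$ directly against the Nijenhuis relation. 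Either way, this term-tracking is the one genuine obstacle; once it is done, substituting back into the expansion above reads off \eqref{creation}.
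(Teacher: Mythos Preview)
Your reduction is correct: after splitting each outer $d^{\nabla}A$ and using the $g$-skew-adjointness of $\nabla_{X_k}A$, the lemma is exactly equivalent to the cyclic vanishing
\[
S:=(\nabla_{d^{\nabla}A(X_1,X_2)}A)X_3+(\nabla_{d^{\nabla}A(X_2,X_3)}A)X_1+(\nabla_{d^{\nabla}A(X_3,X_1)}A)X_2=0.
\]
However, the part of your plan that tries to kill $S$ by Nijenhuis-shuffling is, as you yourself suspect, a dead end: the hermitian identities $(\nabla_XA)\circ A=-A\circ(\nabla_XA)$ and $(\nabla_{AX}A)=A\circ(\nabla_XA)$ by themselves only transform $S$ into expressions of the same shape. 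The constant-curvature input is genuinely needed, and there is a clean way to use precisely the two facts you point at. Write $Q:=(\nabla_{X_3}A)d^{\nabla}A(X_1,X_2)+(\nabla_{X_1}A)d^{\nabla}A(X_2,X_3)-(\nabla_{X_2}A)d^{\nabla}A(X_1,X_3)$ and $\Theta:=\Omega_A(X_1,X_3)X_2-\Omega_A(X_1,X_2)X_3-\Omega_A(X_2,X_3)X_1$. On one hand, the vector identity underlying Proposition~\ref{pfirst} (namely $h(AX_1,AX_2,AX_3)=0$) reads $(d^{\nabla}A)^2(X_1\wedge X_2\wedge X_3)=cA\Theta$, and your own decomposition says $(d^{\nabla}A)^2=S-Q$; hence $S-Q=cA\Theta$. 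On the other hand, rewriting each $d^{\nabla}A\big(d^{\nabla}A(X_i,X_j),AX_k\big)$ in \eqref{u5} via the two hermitian identities gives $-A(S+Q)$, so \eqref{u5} becomes $S+Q=-cA\Theta$. Adding yields $S=0$. This is the missing step; once you see it, your argument closes.

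The paper does something different and arguably slicker. Instead of isolating $S$, it observes from Proposition~\ref{pfirst} that $\Phi_A=-\tfrac{c}{2}\,\Omega_A\wedge\Omega_A$ is a genuine $4$-form, hence $A$-invariant. It then computes $\phi_A(AX_1,AX_2,AX_3,AX_4)$ directly using the $A$-invariance of $d^{\nabla}A$ together with the two hermitian identities, obtaining $\Phi_A(A\cdot,A\cdot,A\cdot,A\cdot)=-\Phi_A+2\,(\text{RHS of \eqref{creation}})$. Equating this to $\Phi_A$ gives the formula in one stroke. So where you separate the problem into ``RHS $+$ cyclic remainder'' and then need two curvature identities to annihilate the remainder, the paper packages the same information as a single comparison between $\Phi_A$ and its $A$-transform; the curvature enters only through the qualitative fact that $\Phi_A$ is $A$-invariant.
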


\begin{proof}
Put \[\phi_A(X_1,X_2,X_3,X_4):=g\big(d^{\nabla} A\big(d^{\nabla} A(X_1,X_2),X_3\big),X_4\big)\] so that \[\Phi_A(X_1,X_2,X_3,X_4)=\phi_A(X_1,X_2,X_3,X_4)-\phi_A(X_1,X_3,X_2,X_4)+\phi_A(X_2,X_3,X_1,X_4).\] Proposition \ref{pfirst} implies that $\Phi_A$ is an $A$-invariant $4$-form on $M.$ Now, from the $A$-invariance of $d^{\nabla}A$ (see Lemma \ref{Lem1}) together with orthogonality, it follows that

\begin{equation*}
\begin{split}
\phi_A(A(X_1),A(X_2),A(X_3),A(X_4))&=g\big(d^{\nabla} A\big(d^{\nabla} A(A(X_1),A(X_2)),A(X_3)\big),A(X_4)\big)\\
&=-g\big(A\big(d^{\nabla} A\big(d^{\nabla} A(X_1,X_2),A(X_3)\big)\big), X_4\big).
\end{split}
\end{equation*}
Observe that 
\begin{equation}\label{leap1}
(\nabla_X A)(AY)=-A((\nabla_X A)Y) 
\end{equation}
This is actually true for any almost-complex structure. The hermitian nature of $A$ implies that
\begin{equation}\label{leap2}
(\nabla_{A(X)} A)Y=A((\nabla_X A)Y);
\end{equation}
see, for example, Lemma 5.4 in \cite{Moroianu}. Then,

\begin{equation*}
\begin{split}
\phi_A(A(X_1),A(X_2),A(X_3),A(X_4))&=-g\big(A\big(d^{\nabla} A\big(d^{\nabla} A(X_1,X_2),A(X_3)\big)\big), X_4\big)\\
&=-g\Big(A\Big(\big(\nabla_{d^{\nabla} A(X_1,X_2)} A\big)(A(X_3))\Big)-\big(\nabla_{A(X_3)} A\big)(d^{\nabla} A(X_1,X_2))\Big), X_4\Big)\\
&=-g\Big(\big(\nabla_{d^{\nabla} A(X_1,X_2)} A\big)X_3+ \big(\nabla_{X_3} A\big)(d^{\nabla} A(X_1,X_2)), X_4\Big)\\
&=-g\Big(d^{\nabla} A\big(d^{\nabla} A(X_1,X_2),X_3\big)+ 2\big(\nabla_{X_3} A\big)(d^{\nabla} A(X_1,X_2)), X_4\Big)\\
&=-g\big(d^{\nabla} A\big(d^{\nabla} A(X_1,X_2),X_3\big),X_4\big)-2g\big(\big(\nabla_{X_3} A\big)(d^{\nabla} A(X_1,X_2)), X_4\big)\\
&=-\phi_A(X_1,X_2,X_3,X_4)-2g\big(\big(\nabla_{X_3} A\big)(d^{\nabla} A(X_1,X_2)), X_4\big).
\end{split}
\end{equation*}
Similarly, by the above equation, 
\begin{equation*}
\begin{split}
-\phi_A(A(X_1),A(X_3),A(X_2),A(X_4))&=\phi_A(X_1,X_3,X_2,X_4)+2g\big(\big(\nabla_{X_2} A\big)(d^{\nabla} A(X_1,X_3)), X_4\big),
\end{split}
\end{equation*}
and 
\begin{equation*}
\begin{split}
\phi_A(A(X_2),A(X_3),A(X_1),A(X_4))&=-\phi_A(X_2,X_3,X_1,X_4)-2g\big(\big(\nabla_{X_1} A\big)(d^{\nabla} A(X_2,X_3)), X_4\big).
\end{split}
\end{equation*}
Therefore, it must be that 
\begin{equation*}
\begin{split}
\Phi_A(A(X_1),A(X_2),A(X_3),A(X_4))&=\phi_A(A(X_1),A(X_2),A(X_3),A(X_4))-\phi_A(A(X_1),A(X_3),A(X_2),A(X_4))+\\
&\phi_A(A(X_2),A(X_3),A(X_1),A(X_4))\\
&=-\Big[\phi_A(X_1,X_2,X_3,X_4)+2g\big(\big(\nabla_{X_3} A\big)(d^{\nabla} A(X_1,X_2)), X_4\big) \Big]+\\
&\Big[\phi_A(X_1,X_3,X_2,X_4)+2g\big(\big(\nabla_{X_2} A\big)(d^{\nabla} A(X_1,X_3)), X_4\big)\Big]\\
&-\Big[\phi_A(X_2,X_3,X_1,X_4)+2g\big(\big(\nabla_{X_1} A\big)(d^{\nabla} A(X_2,X_3)), X_4\big)\Big]\\
&=-\Phi_A(X_1,X_2,X_3,X_4)-2\Big[g\big(\big(\nabla_{X_3} A\big)(d^{\nabla} A(X_1,X_2)), X_4\big)\\
&-g\big(\big(\nabla_{X_2} A\big)(d^{\nabla} A(X_1,X_3)), X_4\big)+g\big(\big(\nabla_{X_1} A\big)(d^{\nabla} A(X_2,X_3)), X_4\big)\Big]\\
&=\Phi_A(X_1,X_2,X_3,X_4).
\end{split}
\end{equation*}
The last 2 lines in the previous equation then say that
\begin{equation*}
\begin{split}
\Phi_A(X_1,X_2,X_3,X_4)&=-\Big[g\big(\big(\nabla_{X_3} A\big)(d^{\nabla} A(X_1,X_2)), X_4\big)\\
&-g\big(\big(\nabla_{X_2} A\big)(d^{\nabla} A(X_1,X_3)), X_4\big)+g\big(\big(\nabla_{X_1} A\big)(d^{\nabla} A(X_2,X_3)), X_4\big)\Big].
\end{split}
\end{equation*}
And now, since for any vector fields $X,Y,Z \in \mathfrak{X}(M),$ one has that \[g((\nabla_X A)Y,Z)=-g((\nabla_X A)Z, Y),\] it is easy to see that
\begin{equation*}
\begin{split}
\Phi_A(X_1,X_2,X_3,X_4)&=g\big(d^{\nabla}A(X_1,X_2), (\nabla_{X_3} A)X_4\big)-g\big(d^{\nabla}A(X_1,X_3), (\nabla_{X_2} A)X_4\big)+\\
&g\big(d^{\nabla}A(X_2,X_3), (\nabla_{X_1} A)X_4\big).
\end{split}
\end{equation*}
\end{proof}

\begin{theorem}\label{aura}
If $(M,g)$ is an almost-complex Riemannian manifold of constant sectional curvature, $A \in AC(M)_g$ is integrable, and if $\nabla$ is the Levi-Civita connection of $g,$ then for all vector fields $X,Y,Z \in \mathfrak{X}(M),$

\[g\big(d^{\nabla}A(X,Y),(\nabla_Z A)Z\big)=0.\]
\end{theorem}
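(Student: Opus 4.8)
The plan is to peel off the ``non-form'' part of the formula for $\Phi_A$ in Lemma \ref{care} and show that it is forced to vanish. Write $F:=d^{\nabla}A\in\Omega^2(M,T_M)$ and let $\beta(X,Y):=(\nabla_X A)Y+(\nabla_Y A)X$ be the symmetric part of $\nabla A$; since $d^{\nabla}A(X,Y)=(\nabla_X A)Y-(\nabla_Y A)X$ for a torsion-free $\nabla$, one has $(\nabla_X A)Y=\tfrac12\bigl(F(X,Y)+\beta(X,Y)\bigr)$, and in particular $(\nabla_Z A)Z=\tfrac12\beta(Z,Z)$. First I would substitute $(\nabla_{X_i}A)X_j=\tfrac12(F_{ij}+\beta_{ij})$, with $F_{ij}:=F(X_i,X_j)$ and $\beta_{ij}:=\beta(X_i,X_j)$, into the formula of Lemma \ref{care}; recognizing, via equation \ref{why}, that $g(F_{12},F_{34})-g(F_{13},F_{24})+g(F_{23},F_{14})=\tfrac12\,(d^{\nabla}A\wedge_g d^{\nabla}A)(X_1,X_2,X_3,X_4)$, the formula splits as
\[
\Phi_A=\tfrac14\,\bigl(d^{\nabla}A\wedge_g d^{\nabla}A\bigr)+\Psi,\qquad \Psi(X_1,X_2,X_3,X_4):=\tfrac12\bigl[g(F_{12},\beta_{34})-g(F_{13},\beta_{24})+g(F_{23},\beta_{14})\bigr].
\]

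The crux is that $\Psi\equiv 0$. On one hand, $\Phi_A=-\tfrac c2\,\Omega_A\wedge\Omega_A$ is a genuine $4$-form (Proposition \ref{pfirst}, equation \ref{can}) and $d^{\nabla}A\wedge_g d^{\nabla}A\in\Omega^4(M)$ is a genuine $4$-form by construction of $\wedge_g$, so $\Psi$ is a genuine $4$-form. On the other hand $\Psi$ is a nonzero multiple of the alternation over the arguments $X_1,X_2,X_3$ (keeping $X_4$ fixed) of the tensor $\tau(X_1,X_2,X_3,X_4):=g\bigl(F(X_1,X_2),\beta(X_3,X_4)\bigr)$, which is symmetric under $X_3\leftrightarrow X_4$. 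Hence alternating all four arguments of $\Psi$ produces a nonzero multiple of the alternation of $\tau$ over all four arguments, which is zero because $\tau$ is symmetric in two of its slots; but a $4$-form is invariant under alternating its arguments, so $\Psi=0$. Written out, this is
\begin{equation}
g\bigl(d^{\nabla}A(X_1,X_2),\beta(X_3,X_4)\bigr)-g\bigl(d^{\nabla}A(X_1,X_3),\beta(X_2,X_4)\bigr)+g\bigl(d^{\nabla}A(X_2,X_3),\beta(X_1,X_4)\bigr)=0.\tag{$\spadesuit$}
\end{equation}

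To finish I would use orthogonality to ``double'' $(\spadesuit)$. From $d^{\nabla}A(U,V)=(\nabla_U A)V-(\nabla_V A)U$ and equations \ref{leap1} and \ref{leap2} one gets the duality identities $d^{\nabla}A(AX,Y)=A\beta(X,Y)$, $d^{\nabla}A(X,AY)=-A\beta(X,Y)$, $\beta(AX,Y)=A\,d^{\nabla}A(X,Y)$, $d^{\nabla}A(AX,AY)=d^{\nabla}A(X,Y)$ and $\beta(AX,AY)=\beta(X,Y)$. Replacing $X_3\mapsto AX_3$, $X_4\mapsto AX_4$ in $(\spadesuit)$ and using these identities together with $g(AU,AV)=g(U,V)$ converts $(\spadesuit)$ into $g(F_{12},\beta_{34})-g(F_{24},\beta_{13})+g(F_{14},\beta_{23})=0$, whereas $(\spadesuit)$ with $X_3$ and $X_4$ interchanged reads $g(F_{12},\beta_{34})-g(F_{14},\beta_{23})+g(F_{24},\beta_{13})=0$. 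Adding these two relations yields $g\bigl(d^{\nabla}A(X_1,X_2),\beta(X_3,X_4)\bigr)=0$ for all vector fields; specializing $X_3=X_4=Z$ and using $\beta(Z,Z)=2(\nabla_Z A)Z$ gives $g\bigl(d^{\nabla}A(X,Y),(\nabla_Z A)Z\bigr)=0$, as claimed.

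The only real difficulty I anticipate is bookkeeping: getting the signs right both in the substitution $(\nabla_X A)Y=\tfrac12(F+\beta)$ into the Lemma \ref{care} formula and in the five duality identities above. The conceptual step --- that the symmetric remainder $\Psi$ must vanish because it is at once a $4$-form and a multiple of an alternation of a tensor symmetric in two of its slots --- is short, and everything past $(\spadesuit)$ is a single linear combination. As a by-product one obtains $\Phi_A=\tfrac14\,d^{\nabla}A\wedge_g d^{\nabla}A$, equivalently $d^{\nabla}A\wedge_g d^{\nabla}A+2c\,\Omega_A\wedge\Omega_A=0$, which is Theorem \ref{2sin}.
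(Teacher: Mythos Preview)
Your proof is correct and takes a genuinely different route from the paper. The paper evaluates $\Phi_A(X,Y,Z,Z)=0$ directly from Lemma \ref{care} to obtain equation \ref{eros}, then observes that the left side is $A$-invariant in $X,Y$ (by integrability) while the right side is $A$-anti-invariant (via identities \ref{leap1}, \ref{leap2}), forcing both to vanish. Your approach instead decomposes $(\nabla_{X_i}A)X_j$ into its skew part $\tfrac12 F_{ij}$ and symmetric part $\tfrac12\beta_{ij}$, splitting the Lemma \ref{care} expression as $\Phi_A=\tfrac14\,d^{\nabla}A\wedge_g d^{\nabla}A+\Psi$; the key step is the clean linear-algebra observation that $\Psi$, being simultaneously a $4$-form and the partial alternation of a tensor symmetric in two slots, must vanish identically. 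You then finish with a duality substitution $(X_3,X_4)\mapsto(AX_3,AX_4)$ and a symmetry swap to isolate $g(F_{12},\beta_{34})=0$.

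Your argument buys two things the paper's does not. First, you obtain the polarized identity $g\bigl(d^{\nabla}A(X_1,X_2),\beta(X_3,X_4)\bigr)=0$ directly, rather than only the diagonal $X_3=X_4=Z$ case (which the paper later polarizes in the proof of Theorem \ref{2sin}). Second, and more strikingly, your decomposition yields Theorem \ref{2sin} for free: once $\Psi=0$ you have $\Phi_A=\tfrac14\,d^{\nabla}A\wedge_g d^{\nabla}A$, and combining with equation \ref{can} gives $d^{\nabla}A\wedge_g d^{\nabla}A+2c\,\Omega_A\wedge\Omega_A=0$ immediately. The paper instead deduces Theorem \ref{2sin} afterwards, passing through Theorem \ref{aura} and a second manipulation of Lemma \ref{care}. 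Your bookkeeping worries are unfounded: the signs in the five duality identities and in the substitution all check out exactly as you wrote them.
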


\begin{proof}
By Lemma \ref{care}, the $4$-form $\Phi_A$ satisfies, for any $X,Y,Z \in \mathfrak{X}(M),$

\begin{equation*}
\begin{split}
0&=\Phi_A(X,Y,Z,Z)\\
&=g\big(d^{\nabla}A(X,Y), (\nabla_Z A)Z\big)-g\big(d^{\nabla}A(X,Z), (\nabla_Y A)Z\big)+\\
&g\big(d^{\nabla}A(Y,Z), (\nabla_X A)Z\big)\\
&=g\big(d^{\nabla}A(X,Y), (\nabla_Z A)Z\big)+g\big((\nabla_Z A)X,(\nabla_Y A)Z \big)-g\big((\nabla_Z A)Y,(\nabla_X A)Z \big).
\end{split}
\end{equation*}
So, 
\begin{equation}\label{eros}
g\big(d^{\nabla}A(X,Y), (\nabla_Z A)Z\big)=g\big((\nabla_Z A)Y,(\nabla_X A)Z \big)-g\big((\nabla_Z A)X,(\nabla_Y A)Z \big).
\end{equation}
The integrability of $A$ implies that the left hand side of equation \ref{eros} is $A$-invariant in $X, Y;$ i.e.\ 
\begin{equation}\label{miracle}
g\big(d^{\nabla}A(A(X),A(Y)), (\nabla_Z A)Z\big)=g\big(d^{\nabla}A(X,Y), (\nabla_Z A)Z\big).
\end{equation}
But the right hand side of equation \ref{eros} is $A$-anti-invariant in $X, Y.$ Indeed, identities \ref{leap1} and \ref{leap2} give that
\begin{equation}\label{from}
\begin{split}
g\big((\nabla_Z A)(A(Y)),(\nabla_{A(X)} A)Z \big)-g\big((\nabla_Z A)(A(X)),(\nabla_{A(Y)} A)Z \big)&=\\
-\big[g\big((\nabla_Z A)Y,(\nabla_X A)Z \big)-g\big((\nabla_Z A)X,(\nabla_Y A)Z \big)\big]
\end{split}
\end{equation}
And from equations \ref{eros}, \ref{miracle}, and \ref{from}, one gets that
\begin{equation*}
\begin{split}
g\big(d^{\nabla}A(X,Y), (\nabla_Z A)Z\big)&=g\big(d^{\nabla}A(A(X),A(Y)), (\nabla_Z A)Z\big)\\
&=g\big((\nabla_Z A)(A(Y)),(\nabla_{A(X)} A)Z \big)-g\big((\nabla_Z A)(A(X)),(\nabla_{A(Y)} A)Z \big)\\
&=-\big[g\big((\nabla_Z A)Y,(\nabla_X A)Z \big)-g\big((\nabla_Z A)X,(\nabla_Y A)Z \big)\big]\\
&=-g\big(d^{\nabla}A(X,Y), (\nabla_Z A)Z\big),
\end{split}
\end{equation*}
thereby confirming that \[g\big(d^{\nabla}A(X,Y), (\nabla_Z A)Z\big)=0.\]
\end{proof}

\begin{corollary}\label{or}
Let $(M,A,g)$ be a hermitian manifold, and assume that $g$ has constant sectional curvature. If $\nabla$ is the Levi-Civita connection, then \[\Omega_A((\nabla_X A)X, (\nabla_Y A)Y)=0.\]
\end{corollary}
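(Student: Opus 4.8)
The plan is to deduce Corollary \ref{or} directly from Theorem \ref{aura} by making a judicious choice of vector field arguments. Recall that Theorem \ref{aura} asserts $g\big(d^{\nabla}A(X,Y),(\nabla_Z A)Z\big)=0$ for \emph{all} $X,Y,Z\in\mathfrak{X}(M)$. The target statement $\Omega_A\big((\nabla_X A)X,(\nabla_Y A)Y\big)=0$ unwinds, by definition of the fundamental form $\Omega_A=g(A(\cdot),\cdot)$, to $g\big(A((\nabla_X A)X),(\nabla_Y A)Y\big)=0$. So the first step is to recognize that it suffices to express the vector $A((\nabla_X A)X)$ in a form to which Theorem \ref{aura} applies.

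The key observation is identity \eqref{leap2}, the hermitian identity $(\nabla_{A(X)}A)Y = A((\nabla_X A)Y)$. Setting $Y=X$ gives $A((\nabla_X A)X) = (\nabla_{A(X)}A)X$. Then I would rewrite this further: since $A$ is $g$-orthogonal and $\nabla g=0$, one has $g((\nabla_W A)U,V) = -g((\nabla_W A)V,U)$, i.e.\ $\nabla_W A$ is $g$-skew; combined with \eqref{leap1}, $(\nabla_W A)(AU) = -A((\nabla_W A)U)$, there is flexibility to massage $(\nabla_{A(X)}A)X$ into the shape $d^{\nabla}A(\,\cdot\,,\,\cdot\,)$ paired against something of the form $(\nabla_Z A)Z$. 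Concretely, recall from the proof of Lemma \ref{care} that $d^{\nabla}A(U,V) = (\nabla_U A)V - (\nabla_V A)U$; taking $V = A(W)$ and using $(\nabla_U A)(A(W)) = -A((\nabla_U A)W)$ lets one trade a term of the form $(\nabla_{A(X)}A)X$ against a $d^{\nabla}A$-term plus a correction, and the correction should again be handled by \eqref{leap1}--\eqref{leap2}.

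More efficiently, I would try the substitution $Y \mapsto A(X)$, $Z \mapsto X$ in the conclusion of Theorem \ref{aura}: that gives $g\big(d^{\nabla}A(X,A(X)),(\nabla_X A)X\big)=0$, but $d^{\nabla}A(X,A(X)) = (\nabla_X A)(A(X)) - (\nabla_{A(X)}A)X = -A((\nabla_X A)X) - A((\nabla_X A)X) = -2A((\nabla_X A)X)$ using \eqref{leap1} and \eqref{leap2}. Hence $g\big(A((\nabla_X A)X),(\nabla_X A)X\big)=0$, which is the diagonal case $X=Y$ of the corollary. To obtain the full bilinear statement one then polarizes: apply the diagonal identity to $X+Y$ and to $X-Y$ (or just to $X+Y$ and subtract the two diagonal contributions), noting that $(\nabla_{X+Y}A)(X+Y) = (\nabla_X A)X + (\nabla_X A)Y + (\nabla_Y A)X + (\nabla_Y A)Y$, and that $\Omega_A$ is a $2$-form so $\Omega_A(v,v)=0$ for any fixed vector. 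Expanding and cancelling diagonal terms should isolate $\Omega_A\big((\nabla_X A)X,(\nabla_Y A)Y\big)$ together with cross terms that are themselves instances of the diagonal identity, at least after using the skew-symmetry of $\Omega_A$.

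The main obstacle I anticipate is the bookkeeping in the polarization step: the quadratic form $X \mapsto \Omega_A\big((\nabla_X A)X,(\nabla_X A)X\big)$ is identically zero for the trivial reason that $\Omega_A$ is alternating, so polarizing \emph{that} yields nothing. What is actually nonzero and must be shown to vanish is the ``mixed diagonal'' $g\big(A((\nabla_X A)X),(\nabla_Y A)Y\big)$, and to capture it one must polarize the genuinely nontrivial identity $g\big(A((\nabla_X A)X),(\nabla_X A)X\big)=0$ obtained above. Replacing $X$ by $X+Y$ there produces, besides the two diagonal terms (which vanish), a collection of terms linear in the "off-diagonal" derivative $(\nabla_X A)Y + (\nabla_Y A)X$; I will need to check that every such leftover term is again killed by Theorem \ref{aura} (applied with suitable arguments) or by the skew/hermitian identities \eqref{leap1}--\eqref{leap2}, so that only $g\big(A((\nabla_X A)X),(\nabla_Y A)Y\big) + g\big(A((\nabla_Y A)Y),(\nabla_X A)X\big)$ survives; that sum is $2\,\Omega_A\big((\nabla_X A)X,(\nabla_Y A)Y\big)$ after one more use of $g$-skewness of $\nabla A$ and orthogonality of $A$. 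Carrying out this cancellation cleanly is the crux; everything else is a direct substitution.
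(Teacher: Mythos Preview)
Your computation $d^{\nabla}A(X,A(X)) = -2A((\nabla_X A)X)$ is correct and is exactly the ingredient the paper uses. The slip is in the choice of the \emph{third} argument when invoking Theorem~\ref{aura}: you set $Z\mapsto X$, which produces
\[
g\big(A((\nabla_X A)X),(\nabla_X A)X\big)=\Omega_A\big((\nabla_X A)X,(\nabla_X A)X\big)=0,
\]
an identity that is vacuous because $\Omega_A$ is alternating. You then propose to polarize ``the genuinely nontrivial identity $g\big(A((\nabla_X A)X),(\nabla_X A)X\big)=0$'', but this is the \emph{same} expression you just (correctly) observed to be trivially zero; polarizing a tautology yields only further tautologies, and the cross terms you hope to isolate are not forced to vanish by that route.

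The paper's proof avoids the detour entirely: in Theorem~\ref{aura} the variable $Z$ is independent of $X,Y$, so one simply substitutes $Y\mapsto A(X)$ and leaves $Z$ free (then renames $Z$ to $Y$). This gives at once
\[
0=g\big(d^{\nabla}A(X,A(X)),(\nabla_Y A)Y\big)=-2\,g\big(A((\nabla_X A)X),(\nabla_Y A)Y\big)=-2\,\Omega_A\big((\nabla_X A)X,(\nabla_Y A)Y\big),
\]
which is the full corollary in one line. No polarization or bookkeeping is needed.
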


\begin{proof}
Since $d^{\nabla} A(X,A(X))=-2((\nabla_X A)X),$ the claim follows directly from Theorem \ref{aura}.
\end{proof}

\noindent
\emph{Proof of Theorem \ref{2sin}.} From Theorem \ref{aura}, \[g\big(d^{\nabla}A(X,Y), (\nabla_{Z+W} A)(Z+W)\big)=0,\] and this holds iff 
\begin{equation*}
g\big(d^{\nabla}A(X,Y), (\nabla_Z A)W\big)=-g\big(d^{\nabla}A(X,Y), (\nabla_W A)Z\big).
\end{equation*}
Now, apply this observation to Lemma \ref{care} to find that
\begin{equation*}
\begin{split}
\Phi_A(X,Y,Z,W)&=g\big(d^{\nabla}A(X,Y), (\nabla_Z A)W\big)-g\big(d^{\nabla}A(X,Z), (\nabla_Y A)W\big)+\\
&g\big(d^{\nabla}A(Y,Z), (\nabla_X A)W\big)\\
&=-g\big(d^{\nabla}A(X,Y), (\nabla_W A)Z\big)+g\big(d^{\nabla}A(X,Z), (\nabla_W A)Y\big)\\
&-g\big(d^{\nabla}A(Y,Z), (\nabla_W A)X\big).
\end{split}
\end{equation*}
By adding the last 2 lines in the above equation, it becomes clear that 

\begin{equation}\label{claridad}
\begin{split}
\Phi_A(X,Y,Z,W)&=\frac{1}{2}\big[g\big(d^{\nabla} A(X,Y), d^{\nabla} A(Z,W)\big)-g\big(d^{\nabla} A(X,Z), d^{\nabla} A(Y,W)\big)+\\
&g\big(d^{\nabla} A(Y,Z), d^{\nabla} A(X,W)\big)\big].
\end{split}
\end{equation}
And now, from equations \ref{can} and \ref{claridad}, it follows that
\begin{equation*}
\begin{split}
0=&\Phi_A(X,Y,Z,W)+\frac{c}{2} (\Omega_A \wedge \Omega_A)(X,Y,Z,W)\\
&=\frac{1}{2}\big[g\big(d^{\nabla} A(X,Y), d^{\nabla} A(Z,W)\big)-g\big(d^{\nabla} A(X,Z), d^{\nabla} A(Y,W)\big)+\\
&g\big(d^{\nabla} A(Y,Z), d^{\nabla} A(X,W)\big)\big]+\frac{c}{2} (\Omega_A \wedge \Omega_A)(X,Y,Z,W),
\end{split}
\end{equation*}
or equivalently (cf.\ equation \ref{why}),
\begin{equation*}
\begin{split}
0&=2\big[g\big(d^{\nabla} A(X,Y), d^{\nabla} A(Z,W)\big)-g\big(d^{\nabla} A(X,Z), d^{\nabla} A(Y,W)\big)+\\
&g\big(d^{\nabla} A(Y,Z), d^{\nabla} A(X,W)\big)\big]+2c(\Omega_A \wedge \Omega_A)(X,Y,Z,W)\\
&=(d^{\nabla} A \wedge_g d^{\nabla} A)(X,Y,Z,W)+2c(\Omega_A \wedge \Omega_A)(X,Y,Z,W).
\end{split}
\end{equation*}
\QEDB

\begin{remark}\label{norm}
Let $(M,A,g)$ be an almost-hermitian manifold of constant sectional curvature equal to $c,$ and let $\nabla$ be the Levi-Civita connection. If $A$ is integrable, then \[\|(\nabla_X A)Y\|^2_g=g\big((\nabla_X A)X, (\nabla_Y A)Y\big)-\frac{c}{2}\big(\Omega_A(X,Y)^2-\|X\|^2_g \|Y\|^2_g +g(X,Y)^2\big).\]
\end{remark}

\begin{proof}
Consider equation \ref{creation}, and recall that \[d^{\nabla}A(X,A(X))=-2A\big((\nabla_X A)X\big).\] Then,  
\begin{equation*}
\begin{split}
\Phi_A(X,Y,A(X),A(Y))&=g\big(d^{\nabla}A(X,Y), (\nabla_{A(X)} A)(A(Y))\big)-g\big(d^{\nabla}A(X,A(X)), (\nabla_Y A)(A(Y))\big)+\\
&g\big(d^{\nabla}A(Y,A(X)), (\nabla_X A)(A(Y))\big)\\
&=g\big((\nabla_X A)Y- (\nabla_Y A)X, (\nabla_X A)Y\big)-2g\big((\nabla_X A)X,(\nabla_Y A)Y \big)+\\
&g\big((\nabla_X A)Y+(\nabla_Y A)X, (\nabla_X A)Y\big)\\
&=2\big[[\|(\nabla_X A)Y\|^2_g - g\big((\nabla_X A)X,(\nabla_Y A)Y \big)\big].
\end{split}
\end{equation*}
Moreover, since \[\frac{c}{2}(\Omega_A \wedge \Omega_A)(X,Y,A(X),A(Y))=c\big(\Omega_A(X,Y)^2-\|X\|^2_g \|Y\|^2_g +g(X,Y)^2\big),\]
it follows that
\begin{equation*}
\begin{split}
0&=\Phi_A (X,Y,A(X),A(Y))+\frac{c}{2}(\Omega_A \wedge \Omega_A)(X,Y,A(X),A(Y))\\
&=2\big[[\|(\nabla_X A)Y\|^2_g - g\big((\nabla_X A)X,(\nabla_Y A)Y \big)\big]+\\
&c\big(\Omega_A(X,Y)^2-\|X\|^2_g \|Y\|^2_g +g(X,Y)^2\big).
\end{split}
\end{equation*}
Therefore, \[\|(\nabla_X A)Y\|^2_g=g\big((\nabla_X A)X, (\nabla_Y A)Y\big)-\frac{c}{2}\big(\Omega_A(X,Y)^2-\|X\|^2_g \|Y\|^2_g +g(X,Y)^2\big).\]
\end{proof}

\section{Concluding remarks}
In the above, it seemed more transparent to work under the general assumption that the metric has constant sectional curvature equal to $c,$ where $c$ is any real number. However, the case of interest is surely $c=1,$ corresponding to the round metric of a sphere. Directly below is a summary of the observations made so far, but specialized to the relevant case.

\begin{corollary}\label{S6}
Let $g$ be the round metric on the unit $S^6,$ and denote its Levi-Civita connection by $\nabla.$ A hypothetical $g$-orthogonal complex structure $A$ on $S^6$ would have to solve the curvature obstruction equation \[d^{\nabla} A \wedge_{g}  d^{\nabla} A+2 \Omega_A \wedge \Omega_A=0,\] where $\Omega_A$ is the associated fundamental $2$-form. Moreover, such an orthogonal complex structure $A$ would need to have the following properties:
\begin{enumerate}
\item $\|(\nabla_X A)Y\|^2_g=g\big((\nabla_X A)X, (\nabla_Y A)Y\big)-\frac{1}{2}\big(\Omega_A(X,Y)^2-\|X\|^2_g \|Y\|^2_g +g(X,Y)^2\big),$ 

\item $g\big(d^{\nabla}A(X,Y),(\nabla_Z A)Z\big)=0,$ and 

\item $\Omega_A((\nabla_X A)X, (\nabla_Y A)Y)=0.$
\end{enumerate}
In particular, the norm of the bilinear form determined by the Levi-Civita covariant derivative of $A$ would have to be symmetric: $\|(\nabla_X A)Y\|_g=\|(\nabla_Y A)X\|_g.$
\end{corollary}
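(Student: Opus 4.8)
The plan is to obtain Corollary \ref{S6} as the specialization $c=1$ of the results already proven for arbitrary constant sectional curvature, since the unit $S^6$ with its round metric has constant sectional curvature $1$ and, carrying the standard octonionic orthogonal almost-complex structure, satisfies the standing hypothesis $AC(S^6)_g\neq\emptyset$. First I would record that the displayed curvature obstruction equation $d^{\nabla}A\wedge_g d^{\nabla}A+2\Omega_A\wedge\Omega_A=0$ is exactly Theorem \ref{2sin} evaluated at $c=1$. Items (1), (2), (3) are then, respectively, Remark \ref{norm}, Theorem \ref{aura}, and Corollary \ref{or}, each read off with $c=1$ (Theorem \ref{aura} and Corollary \ref{or} do not even involve $c$ in their conclusions); so no new computation is needed for these.

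For the final assertion, the one observation to make is that the right-hand side of the formula in item (1),
\[
g\big((\nabla_X A)X,(\nabla_Y A)Y\big)-\tfrac{1}{2}\big(\Omega_A(X,Y)^2-\|X\|^2_g\|Y\|^2_g+g(X,Y)^2\big),
\]
is manifestly invariant under interchanging $X$ and $Y$: the inner-product term is symmetric because $g$ is symmetric; $\Omega_A(X,Y)^2=\Omega_A(Y,X)^2$ because $\Omega_A$ is skew; and $\|X\|^2_g\|Y\|^2_g$ together with $g(X,Y)^2$ are plainly symmetric in $X$ and $Y$. Hence the left-hand side must be symmetric as well, that is, $\|(\nabla_X A)Y\|^2_g=\|(\nabla_Y A)X\|^2_g$, and taking nonnegative square roots yields $\|(\nabla_X A)Y\|_g=\|(\nabla_Y A)X\|_g$.

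There is essentially no technical obstacle at the level of this corollary; all of the genuine work was already carried out in Theorem \ref{2sin}, Lemma \ref{care}, Theorem \ref{aura}, and Remark \ref{norm}. The only point that deserves a moment's emphasis is conceptual rather than computational: the symmetry of $\|(\nabla_X A)Y\|_g$ is not an independent geometric input but is \emph{forced} by the constant-curvature obstruction equation together with the hermitian identities \ref{leap1}--\ref{leap2} used in Remark \ref{norm}; in other words, it is a genuine consequence of integrability plus constant sectional curvature and would generally fail for a merely almost-hermitian $A$. I would end by noting that this is exactly the kind of rigid structural constraint one hopes to contradict, intrinsically, on the round $S^6$.
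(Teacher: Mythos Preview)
Your proposal is correct and matches the paper's approach exactly: the corollary is stated in the paper as a direct specialization to $c=1$ of Theorem~\ref{2sin}, Remark~\ref{norm}, Theorem~\ref{aura}, and Corollary~\ref{or}, with no additional argument given. Your justification of the final symmetry statement---observing that the right-hand side of item~(1) is manifestly symmetric in $X$ and $Y$---is the natural (and only reasonable) way to fill in that detail, which the paper leaves implicit.
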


Let us now comment further on the remarks appearing in the introduction. In order to complete the (re)proof of the non-complexity of the round $S^6,$ it would suffice to show that the actual bilinear form $B_A(X,Y):=(\nabla_X A)Y$ is either symmetric (indicating K{\"a}hler), or equivalently, skew-symmetric (nearly K{\"a}hler). Meaning, the reproof would be finished if it could be shown that $(\nabla_X A)Y=\pm(\nabla_Y A)X.$ Recall that $B_A$ being symmetric means that $d^{\nabla} A=0,$ which is equivalent to the K{\"a}hler condition $\nabla A=0$ (see section 3 of \cite{CSACPS}). Notice as well that a nearly K{\"a}hler structure $A$ is integrable iff it is K{\"a}hler. Indeed, the integrability form of such an $A$ is $I^{\nabla}_A(X,Y)=-4(\nabla_X A)Y.$ The latter formula can be deduced from equation \ref{leap1} together with the skew-symmetry of $B_A.$ This is not to say that the K{\"a}hler contradiction actually follows from the constant curvature obstruction equation. Regardless, it could be worthwhile to look at Corollary \ref{S6} through the Gray-Hervella classification \cite{GH}. Perhaps, that point of view could assist in further unravelling the geometric meaning of the constant curvature obstruction equation, and its consequences.

Despite these shortcomings, Corollary \ref{S6} may be viewed as a partial recovery of LeBrun's theorem \cite{LB}. If the gaps in the suggested approach could be sealed, the method could turn out to be useful for generalizing LeBrun's theorem in new directions. 

A target generalization would be to $S^6$ equipped with a Riemannian metric $g'=g+\epsilon$ that is a small deformation of the round metric $g$ with controlled curvature. One would expect that the associated (sectional curvature) obstructure is given by a perturbation of the round metric obstructure, $d^{\nabla} A \wedge_{g}  d^{\nabla} A+2 \Omega_A \wedge \Omega_A.$ And then, one might try to establish that the perturbed obstruction equation implies that an integrable $A \in AC(S^6)_{g'}$ must be K{\"a}hler, and hence cannot exist.

But it might be possible to tackle the $S^6$ problem more generally. Any almost-complex structure on $S^6$ can be turned into an almost-hermitian structure in the following way. Let $A \in AC(S^6)$ be any almost-complex structure. Put $g_A:=\frac{1}{2}(g(\cdot, \cdot)+g(A \cdot, A \cdot)),$ where $g$ is the round metric. Note that $A \in AC(S^6)_{g_A}$ (i.e.\ $(M, A, g_A)$ is an almost-hermitian manifold). Due to the defining formula of $g_A,$ one would again expect for any associated curvature obstructure, such as the sectional curvature one, to be a perturbation of that obstructure for $g.$ Similarly, one could try to investigate when the integrability of $A$ implies, via the associated curvature obstruction equation, that $A$ should, in addition, be (nearly) K{\"a}hler. This is a contradiction. If this approach worked, it would potentially give a proof that $S^6$ cannot be complex, and the proof would be using intrinsic tools only.\\

Comments are welcomed.

\section*{Acknowledgement}
I wrote this note in Paris in an autonomous way. However, I produced the current version while being supported by the DMS Fellowship of Masaryk University, and the grant project GACR 22-15012J. I am grateful for the discussions that took place during the Srni 43rd Winter School, Geometry and Physics. I would also like to acknowledge Uwe Semmelmann for his feedback.

\noindent 
G.\ Clemente

\noindent
Department of Mathematics and Statistics, Masaryk University, Building 8, Kotlarska 2, 61137 Brno, Czech Republic

\noindent
e-mail: clemente6171@gmail.com
\end{document}